\newtheorem{proposition}{Proposition}
\newtheorem{theorem}[proposition]{Theorem}         
\newtheorem{cor}[proposition]{Corollary}
\newtheorem{lemma}[proposition]{Lemma}
\theoremstyle{definition}
\newtheorem{remark}[proposition]{Remark}
\newtheorem*{rmk*}{Remark}
\renewcommand{\tilde}[1]{\widetilde{#1}}%
\newcommand{\N}{\mathbb N}
\newcommand{\Q}{\mathbb Q}
\newcommand{\Z}{\mathbb Z}
\newcommand{\R}{\mathbb R}
\newcommand{\II}{\mathbb I}
\newcommand{\BB}{\mathcal B}
\newcommand{\GG}{\mathcal G}
\newcommand{\UU}{\mathcal U}
\title[$\alpha$-expansions with odd partial quotients]{$\alpha$-expansions with odd partial quotients}
\author{Florin P. Boca}
\author{Claire Merriman}
\address{Department of Mathematics, University of Illinois at Urbana-Champaign,
Urbana, IL 61801, U.S.A.}
\address{E-mail: fboca@math.uiuc.edu;  emerrim2@illinois.edu}
\begin{document}

\date{\today}

\begin{abstract}
We consider an analogue of Nakada's $\alpha$-continued fraction transformation
in the setting of continued fractions with odd partial quotients.
More precisely, given $\alpha \in [\frac{1}{2}(\sqrt{5}-1),\frac{1}{2}(\sqrt{5}+1)]$,
we show that every irrational number $x\in I_\alpha=[\alpha-2,\alpha)$ can be uniquely represented as
$$
x= \polter{e_1 (x;\alpha)}{d_1 (x;\alpha)} +\polter{e_2 (x;\alpha)}{d_2 (x;\alpha)}+\polter{e_3 (x;\alpha)}{d_3 (x;\alpha)}+\cdots ,
$$
with $e_i(x;\alpha) \in \{ \pm 1\}$ and $d_i(x;\alpha) \in 2\N -1$
determined by the iterates of the transformation
$$\varphi_\alpha (x) := \frac{1}{| x|} - 2 \bigg[ \frac{1}{2| x|} +\frac{1-\alpha}{2} \bigg]-1$$
of $I_\alpha$. We also describe the natural extension of $\varphi_\alpha$ and prove that the endomorphism
$\varphi_\alpha$ is exact.
\end{abstract}

\maketitle

\section{Introduction}
Seminal work of Nakada \cite{Nak} investigated in depth the
$\alpha$-expansion of irrational numbers in the interval $[\alpha-1,\alpha)$,
associated with his Gauss map defined by
\begin{equation*}
f_\alpha (x)=\frac{1}{| x|} -\left[ \frac{1}{| x|} +1-\alpha \right],
\qquad  x\neq 0,
\end{equation*}
for $\alpha$ in the range $[1/2,1]$.
Every irrational number $x\in [\alpha-1,\alpha)$ has a unique expansion
\begin{equation*}
x= \polter{e_1(x;\alpha)}{d_1(x;\alpha)} + \polter{e_2(x;\alpha)}{d_2(x;\alpha)} +
\polter{e_3(x;\alpha)}{d_3 (x;\alpha)} + \cdots
\end{equation*}
with $e_i (x;\alpha)\in \{ \pm 1\}$ and $d_i (x;\alpha) \in \N$, where additional constraints
on pairs of consecutive digits may occur.
This interpolates between the regular continued fractions (RCF) when $\alpha=1$ and
the nearest integer continued fractions (NICF) when $\alpha=\frac{1}{2}$.
Kraaikamp \cite{Kr} investigated additional properties of this
$\alpha$-expansion in the range $\alpha \in [\frac{1}{2},1]$,
and several other works analyzed the more challenging
situation where $\alpha < \frac{1}{2}$ (see, e.g., \cite{AS,CT,JK,KSS,LM,MCM,NN,Na,Ti}).

Nakada's $\alpha$-expansions became part of a broader class of
continued fractions. They correspond to $q=3$ in the family of $\alpha$-Rosen continued fractions
investigated in detail by Dajani, Kraaikamp, and Steiner \cite{DKS}. These continued fractions
are generated
by the Gauss map $T_{\alpha;q} :[\lambda_q (\alpha -1),\lambda_q \alpha) \rightarrow[\lambda_q (\alpha-1),\lambda_q \alpha)$
given by
\begin{equation}\label{1.1}
T_{\alpha;q} (x):=\frac{1}{| x|} -\lambda_q \left[
\frac{1}{\lambda_q | x|} +1-\alpha \right], \qquad x\neq 0,
\end{equation}
where $\lambda_q =2\cos (\frac{\pi}{q})$ and $\alpha \in [\frac{1}{2},\frac{1}{\lambda_q}]$.

In a similar spirit, this paper considers a new class of continued fraction transformations
$\varphi_\alpha$ on $I_\alpha:=[\alpha-2,\alpha)$, with $\alpha \in [g,G]$,
$g=\frac{1}{2}(\sqrt{5}-1)$, $G=\frac{1}{2} (\sqrt{5}+1)$, defined by $\varphi_\alpha (0)=0$ and
\begin{equation}\label{1.2}
\varphi_\alpha (x) := \frac{1}{| x|} - d_\alpha (x),\qquad
d_\alpha (x) := 2 \left[ \frac{1}{2| x|} +\frac{1-\alpha}{2} \right]+1 ,
\qquad x\in I_\alpha \setminus \{ 0\}.
\end{equation}
The map $\varphi_\alpha$ coincides with the Gauss extended odd continued fraction (OCF) map when $\alpha =1$
(\cite{Rie,Sch0}, see also \cite[Section~3.1]{BL}),
and with the Gauss grotesque continued fraction (GCF) map at $\alpha=G$ \cite{Rie,Seb}.
Although formula \eqref{1.2} for the $\alpha$-OCF Gauss map looks like the limiting case $q\rightarrow\infty$ in \eqref{1.1},
the two types of continued fractions are quite different. In particular,
when $q\rightarrow \infty$, the interval $[\frac{1}{2},\frac{1}{\lambda_q}]$ shrinks to
the singleton set $\{ \frac{1}{2}\}$, whereas the range of $\alpha$ in $\varphi_\alpha$ may extend
beyond $[g,G]$.

We first analyze the map $\Phi_\alpha$ defined by
\begin{equation}\label{1.3}
\Phi_\alpha (x,y) := \left( \varphi_\alpha (x), \frac{1}{d_\alpha (x)+ e(x) y}\right),
\end{equation}
where $d_\alpha (x)$ is as in \eqref{1.2} and $e(x):=\operatorname{sign} (x)$.
In Section \ref{alpha_gauss}, we define a region $\Omega_\alpha \subseteq [\alpha-2,\alpha) \times [0,G)$
on which $\Phi_\alpha$ acts bijectively modulo a set of Lebesgue measure zero,
with finite invariant measure $d\mu_\alpha =(1+xy)^{-2} dx dy$.
Given the concrete form of $\Phi_\alpha$, we show in Section \ref{alpha_ergodic} that $(\Omega_\alpha,\BB_{\Omega_\alpha},(3\log G)^{-1} \mu_\alpha,\Phi_\alpha)$ gives the minimal invertible extension
of $(I_\alpha,\BB_{I_\alpha},\nu_\alpha,\varphi_\alpha)$, also called the \emph{natural extension}. Here $\nu_\alpha$ is a $\varphi_\alpha$-invariant probability Lebesgue absolutely continuous measure, explicitly computed in Corollary \ref{Cor12} and $\BB_{C}$
denotes the Borel $\sigma$-algebra on $C$.

Using the explicit description of $\Omega_\alpha$, we also prove in Section \ref{alpha_ocf}
that every number $x\in \II_\alpha:=I_\alpha \setminus \Q$ has a unique representation as
\begin{equation}\label{1.4}
x= \polter{e_1}{d_1} +\polter{e_2}{d_2}+\cdots =\cfrac{e_1}{d_1+\cfrac{e_2}{d_2+\dots}},
\end{equation}
where $e_i =e_i(x;\alpha)=e(\varphi_\alpha^{i-1} (x)) \in \{ \pm 1\}$ and
$d_i =d_i(x;\alpha)=d_\alpha (\varphi_\alpha^{i-1}(x)) \in 2\N -1$.
When $\alpha=g$, we require that $(d_i,e_i)\neq (1,1)$.
When $\alpha=1$, we require
that $(d_i,e_{i+1}) \neq (1,-1)$ as for OCF expansions, while when
$\alpha=G$ we require $(d_i,e_i)\neq (1,-1)$.
In general, our expansion \eqref{1.4} will also impose certain restrictions on
consecutive digits, among them:
\begin{itemize}
\item[(a)]
when $g<\alpha <1$, $(d_i,e_i)=(1,1)$ implies $e_{i+1}=1$, and
$(d_i,e_i)=(1,-1)$ implies that $(d_{i+1},e_{i+1})\neq (1,-1),(3,-1)$;
\item[(b)] when $1<\alpha<G$,
$(d_i,e_i) =(1,-1)$ implies $e_{i+1}=1$, and $(d_i,e_i)=(1,1)$ implies
$(d_{i+1},e_{i+1}) \neq (1,-1)$.
\end{itemize}

In the situation of Nakada's $\alpha$-RCF expansions, the sequence of denominators
$q_n(x;\alpha)$ of an irrational number $x$ is monotonically increasing.
For our $\alpha$-OCF expansions, this is no longer the case, and the change in sign
of $q_{n+1}(x;\alpha)-q_n (x;\alpha)$ is pretty subtle. Nevertheless, a thorough analysis
of the ratio of consecutive denominators, involving four consecutive iterates of the
natural extension $\Phi_\alpha$, enables us to prove the estimate $q_n(x;\alpha) \geq q B^n$
for every $x\in {\mathbb I}_\alpha$,
with $B=(5G-2)^{1/5} \approx 1.43524 > \sqrt{2}$ and $q\approx 0.03438$. Due to a standard argument (see e.g. \cite{Nak}),
this will suffice to establish ergodicity and exactness of $\varphi_\alpha$ for
every $\alpha \in [g,G]$ (cf. Theorem \ref{Thm16} below).
We also show in Theorem \ref{Thm20} that the corresponding Kolmogorov-Sinai entropy is equal to $\frac{\pi^2}{9\log G}$ for
every $\alpha \in [g,G]$.

Interestingly, the shape of the natural extension domain for the
$\alpha$-RCF transformation with $\alpha \in [\sqrt{2}-1,\frac{1}{2}]$ investigated in \cite{KSS,LM} is
similar to the domain of $\Phi_\alpha$ for $\alpha \in [g,1]$,
and the domain for $\alpha$-RCF transformations with $\alpha \in [\frac{1}{2},g]$ from \cite{Nak} is similar
to the domain of $\Phi_\alpha$ for $\alpha \in [1,G]$.

In this paper the Lebesgue measure on $\R$ will be denoted by $\lambda$.
A subset of $\R$ of Lebesgue measure zero will be called a \emph{null-set}.

\section{A skew-shift over the $\alpha$-OCF Gauss map}\label{alpha_gauss}
In this section we display a region $\Omega_\alpha \subseteq [\alpha-2,\alpha) \times [0,G)$,
which is $\Phi_\alpha$-invariant and such that $\Phi_\alpha$ is invertible, bi-measurable, and non-singular
on $\Omega_\alpha$ up to a null-set (see Fig. \ref{Figure1}). In Section \ref{alpha_ergodic} we show that this
gives the natural extension of $\varphi_\alpha$ for some appropriate invariant measure.

We consider the rank-one cylinders
\begin{equation*}
\langle b \rangle_\alpha =\{ x\in I_\alpha: d_\alpha (x)=| b| ,e(x) =\operatorname{sign} (b)\},
\quad b\in \Z^* =\Z \setminus \{ 0\},
\end{equation*}
that is $\langle b\rangle_k =\emptyset$ if $b$ is even and $\langle -1\rangle_\alpha =[ \alpha-2,-\frac{1}{1+\alpha})$,
$\langle 1\rangle_\alpha =( \frac{1}{1+\alpha},\alpha )$, and
\begin{equation*}
\begin{split}
&
\langle -2k-1 \rangle_\alpha = \left[ -\frac{1}{2k-1+\alpha},-\frac{1}{2k+1+\alpha}\right), \\
&
\langle 2k+1\rangle_\alpha = \left( \frac{1}{2k+1+\alpha}, \frac{1}{2k-1+\alpha} \right]
\qquad \mbox{\rm if $k\geq 1$.}
\end{split}
\end{equation*}
Note that $\langle 1\rangle_g =\emptyset$, $\langle -1\rangle_G =\emptyset$, and
$\langle b\rangle_\alpha \neq \emptyset$ for every odd positive integer $b$ when $\alpha \in (g,G)$.

With $d_\alpha (\alpha-2)$, $\varphi_\alpha (\alpha -2)$ as in formula \eqref{1.2} and
\begin{equation*}
d_\alpha (\alpha) :=2\bigg[ \frac{1}{2\alpha} +\frac{1-\alpha}{2} \bigg]+1, \qquad
\varphi_\alpha (\alpha) := \frac{1}{\alpha} -d_\alpha (\alpha) ,
\end{equation*}
it is elementary to check the following statement.

\begin{lemma}\label{Lemma1}
{\em (i)} When $g< \alpha \leq G$ we have $d_\alpha (\alpha)=1$, and
when $g\leq \alpha< G$ we have $d_\alpha (\alpha-2)=1$.

{\em (ii)} When $g< \alpha \leq 1$ we have
\begin{equation}\label{2.1}
\begin{split}
& -\frac{1}{3+\alpha} < \varphi_\alpha (\alpha-2) \leq 0 \leq \varphi_\alpha (\alpha) < \frac{1}{1+\alpha},
 \\
& d_\alpha ( \varphi_\alpha (\alpha)) =d_\alpha ( -\varphi_\alpha (\alpha-2)) -2.
\end{split}
\end{equation}

{\em (iii)} When $1\leq \alpha < G$ we have
\begin{equation}\label{2.2}
\begin{split}
& -\frac{1}{1+\alpha} < \varphi_\alpha (\alpha) \leq 0 \leq \varphi_\alpha (\alpha -2) < \alpha,
 \\
& d_\alpha ( \varphi_\alpha (\alpha -2)) +2 = d_\alpha ( -\varphi_\alpha (\alpha)).
\end{split}
\end{equation}

{\em (iv)} When $\alpha \in(g,1)\cup(1,G)$ we have
\[\frac{1}{\varphi_\alpha(\alpha)}+\frac{1}{\varphi_\alpha (\alpha-2)} =-2 \qquad \mbox{and}\qquad
\varphi_\alpha^2 (\alpha)=\varphi_\alpha^2 (\alpha-2).\]
When $\alpha=1$, $\varphi_\alpha^2 (\alpha)=\varphi_\alpha^2 (\alpha-2)$ holds.
\end{lemma}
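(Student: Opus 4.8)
The plan is to reduce every assertion to a single observation: the quadratics $\alpha^2+\alpha-1$ and $\alpha^2-\alpha-1$ have $g$ and $G$ respectively as their positive roots, so that throughout the analysis the inequalities $\alpha>g$ and $\alpha<G$ become available as sign conditions on these polynomials. Everything else is bookkeeping once the endpoint values of $\varphi_\alpha$ are made explicit.

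First I would prove (i). Since $d_\alpha(\alpha)=1$ is equivalent to $\bigl[\frac{1}{2\alpha}+\frac{1-\alpha}{2}\bigr]=0$, I would rewrite $\frac{1}{2\alpha}+\frac{1-\alpha}{2}=\frac{1+\alpha-\alpha^2}{2\alpha}$ and check it lies in $[0,1)$: the numerator is $\ge 0$ exactly when $\alpha\le G$, while the quantity is $<1$ exactly when $\alpha^2+\alpha-1>0$, i.e. $\alpha>g$. Similarly $d_\alpha(\alpha-2)=1$ reduces to $\bigl[\frac{1}{2(2-\alpha)}+\frac{1-\alpha}{2}\bigr]=0$, where the relevant quantity equals $\frac{\alpha^2-3\alpha+3}{2(2-\alpha)}$; its numerator has negative discriminant (hence is always positive) and the quantity is $<1$ precisely when $\alpha^2-\alpha-1<0$, i.e. $\alpha<G$. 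With $d_\alpha(\alpha)=d_\alpha(\alpha-2)=1$ in hand I obtain the closed forms $\varphi_\alpha(\alpha)=\frac{1-\alpha}{\alpha}$ and $\varphi_\alpha(\alpha-2)=\frac{\alpha-1}{2-\alpha}$, valid on all of $(g,G)$.

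The endpoint inequalities in (ii) and (iii) then follow by substituting these closed forms and clearing the (positive) denominators. For example $\varphi_\alpha(\alpha)<\frac{1}{1+\alpha}$ becomes $1-\alpha^2<\alpha$, i.e. $\alpha>g$, and $\varphi_\alpha(\alpha-2)>-\frac{1}{3+\alpha}$ becomes $\alpha^2+\alpha-1>0$; in (iii) both $\varphi_\alpha(\alpha)>-\frac{1}{1+\alpha}$ and $\varphi_\alpha(\alpha-2)<\alpha$ reduce to $\alpha^2-\alpha-1<0$, i.e. $\alpha<G$. The sign pattern $\varphi_\alpha(\alpha)\ge 0\ge\varphi_\alpha(\alpha-2)$ for $\alpha\le 1$ (and its reversal for $\alpha\ge 1$) is immediate from the closed forms. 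For (iv) the sum identity is a one-line computation: $\frac{1}{\varphi_\alpha(\alpha)}+\frac{1}{\varphi_\alpha(\alpha-2)}=\frac{\alpha}{1-\alpha}-\frac{2-\alpha}{1-\alpha}=-2$.

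The substantive step is the equality of second iterates. Writing $u=\varphi_\alpha(\alpha)$ and $v=\varphi_\alpha(\alpha-2)$, these have opposite signs on each of $(g,1)$ and $(1,G)$, so the identity $\frac1u+\frac1v=-2$ translates into $\frac{1}{|v|}=\frac{1}{|u|}+2$ when $g<\alpha<1$ (and symmetrically $\frac{1}{|u|}=\frac{1}{|v|}+2$ when $1<\alpha<G$). Because $d_\alpha$ sees only $\frac{1}{2|x|}$, adding $2$ to $\frac{1}{|x|}$ adds exactly $1$ inside the floor, hence adds exactly $2$ to $d_\alpha$; this is precisely the digit relations $d_\alpha(\varphi_\alpha(\alpha))=d_\alpha(-\varphi_\alpha(\alpha-2))-2$ in (ii) and $d_\alpha(\varphi_\alpha(\alpha-2))+2=d_\alpha(-\varphi_\alpha(\alpha))$ in (iii), using $d_\alpha(-t)=d_\alpha(t)$. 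Consequently the $+2$ in $\frac{1}{|\cdot|}$ and the compensating $+2$ in $d_\alpha$ cancel in $\varphi_\alpha(x)=\frac{1}{|x|}-d_\alpha(x)$, yielding $\varphi_\alpha(u)=\varphi_\alpha(v)$, i.e. $\varphi_\alpha^2(\alpha)=\varphi_\alpha^2(\alpha-2)$. I expect this cancellation, and in particular verifying that the floor jumps by exactly one, to be the only point requiring care; the boundary case $\alpha=1$ is degenerate, since there $u=v=0$ and both sides equal $\varphi_\alpha(0)=0$.
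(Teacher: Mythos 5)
Your proposal is correct: every computation checks out (the reductions of $d_\alpha(\alpha)=1$ and $d_\alpha(\alpha-2)=1$ to the sign conditions on $\alpha^2+\alpha-1$ and $\alpha^2-\alpha-1$, the closed forms $\varphi_\alpha(\alpha)=\frac{1-\alpha}{\alpha}$, $\varphi_\alpha(\alpha-2)=\frac{\alpha-1}{2-\alpha}$, the endpoint inequalities, and the derivation of the digit relations and of $\varphi_\alpha^2(\alpha)=\varphi_\alpha^2(\alpha-2)$ from the sum identity $\frac{1}{u}+\frac{1}{v}=-2$ via $[t+1]=[t]+1$). The paper offers no proof beyond declaring the lemma ``elementary to check,'' and your argument is precisely the direct verification intended, so there is nothing to reconcile.
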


This provides for $g < \alpha < 1$ the identities
$$\alpha= \polter{1}{1} +\polter{1}{d_\alpha (\varphi_\alpha (\alpha))+\varphi^2_\alpha(\alpha)}
\qquad \mbox{\rm and} \qquad
\alpha-2= \polter{-1}{1} +\polter{-1}{d_\alpha (\varphi_\alpha(\alpha-2))+\varphi^2_\alpha(\alpha)}.$$
For $1< \alpha <  G$, we get
$$\alpha= \polter{1}{1} +\polter{-1}{d_\alpha (\varphi_\alpha(\alpha)) +\varphi^2_\alpha(\alpha)} \qquad
\mbox{\rm and} \qquad \alpha-2= \polter{-1}{1} +\polter{1}{d_\alpha (\varphi_\alpha (\alpha-2)) +\varphi^2_\alpha(\alpha)}.$$

We consider the rectangles
\begin{equation*}
\Omega_{I;\alpha} =I_\alpha \times [0,2-G),\quad
\Omega_{II;\alpha} =(\varphi_\alpha(\alpha),\alpha) \times (2-G,1],\quad
\Omega_{III;\alpha} = [\varphi_\alpha (\alpha-2),\alpha) \times [1,G),
\end{equation*}
and define
\begin{equation*}
\Omega_\alpha =\Omega_{I;\alpha} \cup \Omega_{II;\alpha} \cup \Omega_{III;\alpha}.
\end{equation*}

First, we consider $g< \alpha < 1$ and partition $\Omega_\alpha$,
up to a null-set, as $\Omega_{1;\alpha} \cup \cdots \cup \Omega_{7;\alpha}$, where
\begin{equation*}
\begin{split}
& \Omega_{1;\alpha} =\left[ \alpha-2,-\frac{1}{1+\alpha}\right) \times [0,2-G),\quad
\Omega_{2;\alpha} =\left( \frac{1}{1+\alpha},\alpha \right) \times [0,G),\\
& \Omega_{3;\alpha} =\left[ -\frac{1}{1+\alpha}, 0\right) \times [0,2-G),\quad
\Omega_{4;\alpha} = \left( 0,\frac{1}{1+\alpha}\right] \times [0,2-G) ,\\
& \Omega_{5;\alpha} = \left( 0,\frac{1}{1+\alpha}\right] \times [1,G), \quad
\Omega_{6;\alpha} = \left( \varphi_\alpha (\alpha),\frac{1}{1+\alpha}\right] \times [2-G,1), \\
& \Omega_{7;\alpha} = [\varphi_\alpha (\alpha-2),0) \times [1,G).
\end{split}
\end{equation*}

\begin{center}
\begin{figure}
\includegraphics[scale=1]{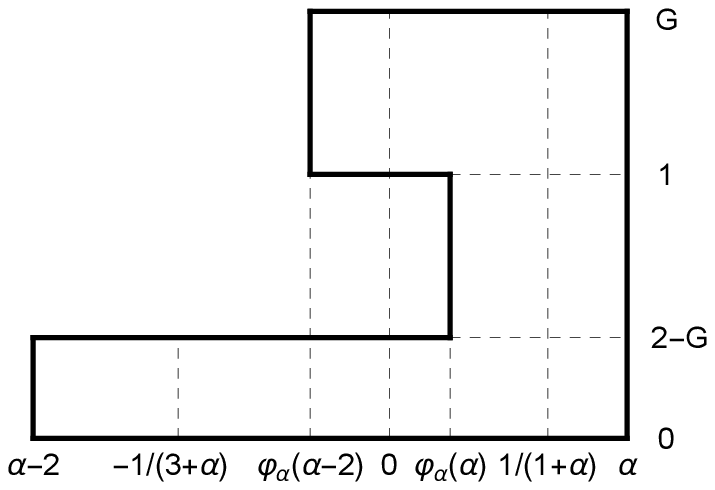}
\includegraphics[scale=1]{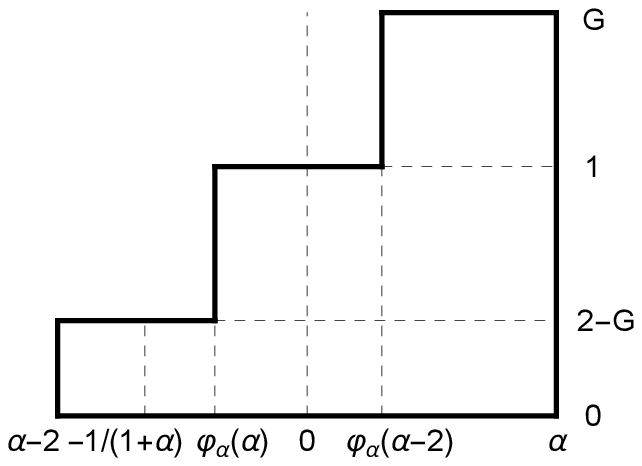}
\caption{\small The natural extension domains $\Omega_\alpha$ for
$g< \alpha \leq 1$ (left) and $1\leq \alpha < G$ (right)}\label{Figure1}
\end{figure}
\end{center}

\begin{lemma}\label{Lemma2}
Assuming $g< \alpha < 1$, the transformation $\Phi_\alpha$ maps $\Omega_\alpha$ one-to-one onto
$\Omega_\alpha$ up to a null-set. More precisely, denoting $2K+1=d_\alpha (\varphi_\alpha (\alpha))$ we have
\begin{equation*}
\begin{split}
& \Phi_\alpha (\Omega_{1;\alpha}) =\Omega_{III;\alpha}, \qquad  \Phi_\alpha (\Omega_{2;\alpha})=\Omega_{II;\alpha},
\qquad \Phi_\alpha(\Omega_{3;\alpha}) =I_\alpha \times \bigcup_{k\geq 1} \left[ \frac{1}{2k+1},\frac{1}{2k-1+G}\right),\\
& \Phi_\alpha (\Omega_{4;\alpha}) =I_\alpha \times \bigcup_{k\geq 1}
\left( \frac{1}{2k+3-G}, \frac{1}{2k+1}\right], \qquad
\Phi_\alpha (\Omega_{5;\alpha}) = I_\alpha \times \bigcup_{k\geq 1} \left( \frac{1}{2k+1+G}, \frac{1}{2k+2}\right],\\
& \Phi_\alpha (\Omega_{6;\alpha}) = I_\alpha \times \bigcup_{1\leq k\leq K-1}
\left( \frac{1}{2k+2},\frac{1}{2k+3-G}\right] \cup [ \alpha -2,\varphi_\alpha^2 (\alpha) )
\times \left( \frac{1}{2K+2},\frac{1}{2K+3-G} \right],\\
& \Phi_\alpha (\Omega_{7;\alpha})= I_\alpha \times \bigcup_{k\geq K+1}
\left[ \frac{1}{2k+2},\frac{1}{2k+3-G}\right) \cup
[ \varphi_\alpha^2 (\alpha),\alpha ) \times\left[ \frac{1}{2K+2},\frac{1}{2K+3-G}\right) .
\end{split}
\end{equation*}
\end{lemma}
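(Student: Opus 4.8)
The plan is to exploit the product structure of $\Phi_\alpha$ over the rank-one cylinders. On each $\langle b\rangle_\alpha$ the digit $d_\alpha$ and the sign $e$ are constant, so there $\Phi_\alpha$ is a product of two M\"obius maps: the first coordinate is $x\mapsto\varphi_\alpha(x)=\frac1{|x|}-|b|$, a strictly monotone map of $\langle b\rangle_\alpha$ onto its image with non-vanishing derivative, and the second coordinate is the contraction $y\mapsto 1/(|b|+e(x)y)$, again monotone. Hence $\Phi_\alpha$ is injective and bi-measurable on each set of the form (cylinder)$\,\times\,$(strip), and it suffices to (i) compute the image of every $\Omega_{j;\alpha}$, and (ii) check that the seven images are pairwise disjoint up to null-sets with union $\Omega_\alpha$. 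Throughout I would rewrite endpoints using $G-1=g=1/G$, $gG=1$, $G^2=G+1$, and $2-G=1/(1+G)=g^2$.

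First the easy pieces. The $x$-projections of $\Omega_{1;\alpha}$ and $\Omega_{2;\alpha}$ are exactly the cylinders $\langle-1\rangle_\alpha$ and $\langle1\rangle_\alpha$, on which $\varphi_\alpha$ maps onto $[\varphi_\alpha(\alpha-2),\alpha)$ and $(\varphi_\alpha(\alpha),\alpha)$ respectively; the contraction $y\mapsto 1/(1-y)$ sends $[0,2-G)$ onto $[1,G)$ and $y\mapsto 1/(1+y)$ sends $[0,G)$ onto $(2-G,1]$. This gives $\Phi_\alpha(\Omega_{1;\alpha})=\Omega_{III;\alpha}$ and $\Phi_\alpha(\Omega_{2;\alpha})=\Omega_{II;\alpha}$. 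For $\Omega_{3;\alpha},\Omega_{4;\alpha},\Omega_{5;\alpha}$ the $x$-projection is a one-sided neighbourhood of $0$, which decomposes into the full cylinders $\langle-(2k+1)\rangle_\alpha$ (resp.\ $\langle2k+1\rangle_\alpha$), $k\geq1$; on each of these $\varphi_\alpha$ is onto $I_\alpha$, so the first coordinate of the image is all of $I_\alpha$, while applying the contraction to the relevant $y$-strip produces exactly the stated countable unions.

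The delicate pieces are $\Omega_{6;\alpha}$ and $\Omega_{7;\alpha}$, whose $x$-intervals are cut at the interior points $\varphi_\alpha(\alpha)$ and $\varphi_\alpha(\alpha-2)$. Writing $2K+1=d_\alpha(\varphi_\alpha(\alpha))$, the cylinders $\langle2k+1\rangle_\alpha$ with $1\leq k\leq K-1$ lie fully inside the $x$-interval of $\Omega_{6;\alpha}$ and contribute the full strips $I_\alpha\times(\frac1{2k+2},\frac1{2k+3-G}]$, while the last cylinder $\langle2K+1\rangle_\alpha$ is truncated by $\varphi_\alpha(\alpha)$; since $\varphi_\alpha(\varphi_\alpha(\alpha))=\varphi_\alpha^2(\alpha)$, its truncated part maps to $[\alpha-2,\varphi_\alpha^2(\alpha))$. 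By Lemma \ref{Lemma1}(ii) the point $\varphi_\alpha(\alpha-2)$ lies in $\langle-(2K+3)\rangle_\alpha$, so for $\Omega_{7;\alpha}$ the full cylinders occur for $k\geq K+1$ and the truncated one is $\langle-(2K+3)\rangle_\alpha$; because $\varphi_\alpha^2(\alpha-2)=\varphi_\alpha^2(\alpha)$ by Lemma \ref{Lemma1}(iv), its truncated part maps to the complementary interval $[\varphi_\alpha^2(\alpha),\alpha)$. This is the crux of the argument: Lemma \ref{Lemma1}(iv) is precisely what makes the two truncated $x$-pieces at level $k=K$ fit together into all of $I_\alpha$.

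Finally I would verify disjointness and covering. Listing the $y$-endpoints $\frac1{2k-1+G},\frac1{2k+1},\frac1{2k+3-G},\frac1{2k+2},\frac1{2k+1+G}$ in decreasing order and checking that consecutive strips share an endpoint with complementary open/closed sides shows that the images of $\Omega_{3;\alpha},\dots,\Omega_{7;\alpha}$ tile $\Omega_{I;\alpha}=I_\alpha\times[0,2-G)$ up to a null-set, the single strip at level $k=K$ being split in the $x$-direction between $\Omega_{6;\alpha}$ and $\Omega_{7;\alpha}$. Together with $\Phi_\alpha(\Omega_{1;\alpha})=\Omega_{III;\alpha}$ and $\Phi_\alpha(\Omega_{2;\alpha})=\Omega_{II;\alpha}$ this recovers $\Omega_\alpha$ exactly and yields the bijection modulo null-sets. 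I expect the main obstacle to be the bookkeeping in this last step: tracking the index shift between the cylinder label and the strip index, and confirming that no gaps or overlaps arise among the infinitely many $y$-strips.
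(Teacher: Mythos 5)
Your proposal is correct and follows essentially the same route as the paper: a case-by-case computation of $\Phi_\alpha$ on each $\Omega_{j;\alpha}$ decomposed into full cylinders times $y$-strips plus the two truncated cylinders at level $K$ (respectively $K+1$), using Lemma \ref{Lemma1}(ii) to identify $d_\alpha(-\varphi_\alpha(\alpha-2))=2K+3$ and the identity $\varphi_\alpha^2(\alpha)=\varphi_\alpha^2(\alpha-2)$ to make the two truncated images fill out $I_\alpha$, followed by the disjointness/tiling check. The only difference is cosmetic: the paper obtains the endpoint $\frac{\alpha}{1-\alpha}-2K-1$ of the image of the truncated piece of $\Omega_{7;\alpha}$ by direct computation rather than by citing Lemma \ref{Lemma1}(iv), and your indexing of the "full" cylinders in $\Omega_{7;\alpha}$ should be read as strip indices ($k=\ell-1$), a shift you yourself flag.
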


\begin{proof}
The following situations occur:

(i) $(x,y)\in \Omega_{1;\alpha}$, so $d_\alpha (x)=1$, $e(x)=-1$. The range of
$\Phi_\alpha (x,y)=( \frac{1}{| x |} -1,\frac{1}{1-y})$
is exactly $[ \varphi_\alpha (\alpha-2),\alpha) \times [  1,G ) =\Omega_{III;\alpha}$.

(ii) $(x,y)\in \Omega_{2;\alpha}$, so $d_\alpha(x)=1$, $e(x)=1$. The range of
$\Phi_\alpha (x,y)=( \frac{1}{x}-1,\frac{1}{1+y})$ is exactly $\Omega_{II;\alpha}$.

(iii) $(x,y)\in \Omega_{3;\alpha}$, so $d_\alpha(x)=2k+1 \geq 3$, $e(x) =-1$, and
$\Phi_\alpha ( x,y) =( \frac{1}{| x|}-(2k+1), \frac{1}{2k+1-y})$
covers the rectangle $I_\alpha \times [ \frac{1}{2k+1},\frac{1}{2k-1+G})$
when $(x,y)$ covers $\langle -2k-1\rangle_\alpha  \times
[0,2-G)$.

(iv) $(x,y)\in \Omega_{4;\alpha}$, so $d_\alpha (x)=2k+1 \geq 3$, $e(x) =1$, and
$\Phi_\alpha ( x,y) =( \frac{1}{x}-(2k+1), \frac{1}{2k+1+y})$
covers the rectangle $I_\alpha \times ( \frac{1}{2k+3-G},\frac{1}{2k+1}]$
when $(x,y)$ covers $\langle 2k+1\rangle_\alpha \times [0,2-G)$.

(v) $(x,y)\in \Omega_{5;\alpha}$, so $d_\alpha (x)=2k+1 \geq 3$, $e(x) =1$, and
$\Phi_\alpha ( x,y) =( \frac{1}{x}-(2k+1), \frac{1}{2k+1+y})$
covers the rectangle $I_\alpha \times ( \frac{1}{2k+1+G},\frac{1}{2k+2}]$
when $(x,y)$ covers $\langle 2k+1 \rangle_\alpha \times [1,G)$.

(vi) $(x,y)\in \Omega_{6;\alpha}$, so $d_\alpha (x)=2k+1\geq 3$, $e(x) =1$.
Since $d_\alpha (\varphi_\alpha(\alpha))=2K+1$, we have
$\varphi_\alpha^2 (\alpha)= \frac{\alpha}{1-\alpha}
-(2K+1)$.
Then $\Phi_\alpha (x,y) =( \frac{1}{x}-(2k+1),\frac{1}{2k+1+y})$ covers
the rectangle $I_\alpha \times ( \frac{1}{2k+2},\frac{1}{2k+3-G}]$
when $(x,y)$ covers $\langle 2k+1\rangle_\alpha \times [2-G,1)$ and $1\leq k <K$,
and if $k=K$ it covers the rectangle
$[ \alpha -2,\frac{\alpha}{1-\alpha}-2K-1) \times ( \frac{1}{2K+2},\frac{1}{2K+3-G}]$
when $(x,y)$ covers $( \varphi_\alpha(\alpha),\frac{1}{2K-1+\alpha}] \times [2-G,1)$.

(vii) $(x,y)\in \Omega_{7;\alpha}$, so $d_\alpha (x)=2\ell+1 \geq 5$, $e(x)=-1$.
Let $L$ such that $d_\alpha (| \varphi_\alpha (\alpha-2)| )=d_\alpha ( \frac{1-\alpha}{2-\alpha})
=2L+1$. By \eqref{2.2} we have $L=K+1$. Then
$\Phi_\alpha (x,y) =( \frac{1}{| x|}-(2\ell +1),y^\prime=\frac{1}{2\ell +1-y})$ covers
the rectangle $I_\alpha \times [ \frac{1}{2\ell},\frac{1}{2\ell+1-G})$
when $(x,y)$ covers the rectangle $\langle 2\ell+1 \rangle_\alpha \times [1,G)$
and $\ell > L=K+1$, and if $\ell=K+1$ it covers the rectangle
$[ \frac{\alpha}{1-\alpha} -2K-1,\alpha ) \times
[ \frac{1}{2K+2} ,\frac{1}{2K+3-G})$
when $(x,y)$ covers $[ \varphi_\alpha (\alpha-2),-\frac{1}{2K+3+\alpha}) \times [1,G)$.

The map $\Phi_\alpha$ is one-to-one and onto on $\Omega_\alpha$ up to a null-set because the interior sets
$\mathring{\Omega}_{1;\alpha},\ldots ,\mathring{\Omega}_{7;\alpha}$ and their images $\Phi_\alpha(\mathring{\Omega}_1),\ldots,
\Phi_\alpha (\mathring{\Omega}_7)$ are disjoint.
\end{proof}

When $1< \alpha < G$, we consider
\begin{equation*}
\begin{split}
& \Omega_{1;\alpha} =\left[ \alpha-2,-\frac{1}{1+\alpha}\right) \times [0,2-G) ,\quad
\Omega_{2;\alpha} =\left( \frac{1}{1+\alpha},\alpha \right) \times [0,1) ,\\
& \Omega_{3;\alpha} = \left[ -\frac{1}{1+\alpha},0\right) \times [0,2-G),\quad
\Omega_{4;\alpha} = \left( 0,\frac{1}{1+\alpha} \right] \times [0,1),\quad
\Omega_{5;\alpha} = (\varphi_\alpha (\alpha),0) \times (2-G,1] .
\end{split}
\end{equation*}

When $\frac{1}{1+\alpha} < \varphi_\alpha (\alpha-2) <\alpha$, we take $\Omega_{6;\alpha}=[\varphi_\alpha (\alpha-2),\alpha) \times [1,G)$
and partition $\Omega_\alpha$ modulo a null-set as $\Omega_{1;\alpha} \cup \cdots \cup \Omega_{6;\alpha}$.
When $\varphi_\alpha (\alpha-2) \leq \frac{1}{1+\alpha}$, we take
$\Omega_{6;\alpha} =[ \varphi_\alpha (\alpha-2),\frac{1}{1+\alpha}] \times [1,G)$,
$\Omega_{7;\alpha} = ( \frac{1}{1+\alpha} ,\alpha ) \times [1,G)$,
and partition $\Omega_\alpha$ modulo a null-set as $\Omega_{1;\alpha} \cup \cdots \cup \Omega_{7;\alpha}$.

We write $d_\alpha (\varphi_\alpha (\alpha-2))=2L+1$, $L\geq 0$, and
$d_\alpha (-\varphi_\alpha (\alpha))=2K+1$, $K\geq 1$, so that
\begin{equation*}
\frac{2-\alpha}{\alpha-1} -(2L+1)=\varphi_\alpha^2 (\alpha-2) \qquad \mbox{\rm and} \qquad
\frac{\alpha}{\alpha-1} -(2K+1) =\varphi_\alpha^2(\alpha).
\end{equation*}
By \eqref{2.2} we have $K=L+1$.
Similarly to the proof of Lemma \ref{Lemma2} we find
\begin{equation}\label{2.3}
\begin{split}
& \Phi_\alpha (\Omega_{1;\alpha})=\Omega_{III;\alpha}, \quad
\Phi_\alpha (\Omega_{2;\alpha}) = (\varphi_\alpha(\alpha),\alpha) \times \left( \frac{1}{2},1\right],\\
& \Phi_\alpha (\Omega_{3;\alpha}) = I_\alpha \times \bigcup_{k\geq 1} \left[ \frac{1}{2k+1},\frac{1}{2k-1+G}\right),\quad
 \Phi_\alpha (\Omega_{4;\alpha}) =I_\alpha \times \bigcup_{k\geq 1} \left( \frac{1}{2k+2},\frac{1}{2k+1}\right],\\
& \Phi_\alpha (\Omega_{5;\alpha}) =( \varphi_\alpha^2 (\alpha),\alpha ) \times\left( \frac{1}{2K-1+G},\frac{1}{2K} \right]
\cup I_\alpha \times \bigcup_{k>K} \left( \frac{1}{2k-1+G},\frac{1}{2k}\right] .
\end{split}
\end{equation}

\begin{lemma}\label{Lemma3}
Assuming $1< \alpha < G$, the transformation $\Phi_\alpha$ maps $\Omega_\alpha$ one-to-one onto
$\Omega_\alpha$ up to a null-set.
\end{lemma}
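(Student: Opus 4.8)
The plan is to carry out, for $1<\alpha<G$, the same scheme as in the proof of Lemma \ref{Lemma2}: compute the image $\Phi_\alpha(\Omega_{i;\alpha})$ of every piece of the partition of $\Omega_\alpha$, and then conclude, exactly as there, that $\Phi_\alpha$ is one-to-one and onto modulo a null-set by checking that the interiors $\mathring{\Omega}_{i;\alpha}$ are pairwise disjoint, that the interiors of the images $\Phi_\alpha(\mathring{\Omega}_{i;\alpha})$ are pairwise disjoint, and that the images exhaust $\Omega_\alpha$. Within each $\Omega_{i;\alpha}$ the sign $e(x)$ is constant, and on each cylinder $\langle b\rangle_\alpha$ met by its $x$-slice $d_\alpha(x)$ is constant too; there, by \eqref{1.3}, $\Phi_\alpha$ acts as the product of a Möbius map in $x$ and a Möbius map in $y$, hence as a homeomorphism onto a rectangle, which is what makes the image computation and the disjointness check effective. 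Since $\Phi_\alpha(\Omega_{i;\alpha})$ for $1\le i\le 5$ is already recorded in \eqref{2.3}, the only new computations are $\Phi_\alpha(\Omega_{6;\alpha})$ and, in the sub-case $\varphi_\alpha(\alpha-2)\le\frac1{1+\alpha}$, also $\Phi_\alpha(\Omega_{7;\alpha})$.

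On $\Omega_{6;\alpha}$ and $\Omega_{7;\alpha}$ the $x$-slice is positive, so $e(x)=1$ and $d_\alpha(x)$ is read off from the cylinder $\langle d_\alpha(x)\rangle_\alpha$ containing $x$; the formula $\Phi_\alpha(x,y)=(\frac1x-d_\alpha(x),\frac{1}{d_\alpha(x)+y})$ sends each $[1,G)$ strip to the $y$-interval $(\frac{1}{d_\alpha(x)+G},\frac{1}{d_\alpha(x)+1}]$. In the sub-case $\frac1{1+\alpha}<\varphi_\alpha(\alpha-2)<\alpha$ one has $\Omega_{6;\alpha}\subseteq\langle1\rangle_\alpha\times[1,G)$, so $d_\alpha\equiv1$ and $\Phi_\alpha(\Omega_{6;\alpha})=(\varphi_\alpha(\alpha),\varphi_\alpha^2(\alpha)]\times(2-G,\frac12]$. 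In the sub-case $\varphi_\alpha(\alpha-2)\le\frac1{1+\alpha}$ the slice of $\Omega_{6;\alpha}$ sweeps the cylinders $\langle3\rangle_\alpha,\dots,\langle2L+1\rangle_\alpha$, while $\Omega_{7;\alpha}\subseteq\langle1\rangle_\alpha\times[1,G)$ gives $\Phi_\alpha(\Omega_{7;\alpha})=(\varphi_\alpha(\alpha),\alpha)\times(2-G,\frac12]$; after the reindexing $j=k+1$ the $y$-intervals produced by $\Omega_{6;\alpha}$ are exactly the $(\frac{1}{2j-1+G},\frac1{2j}]$ that occur for $\Omega_{5;\alpha}$ in \eqref{2.3}, now ranging over $j=2,\dots,K$ rather than $j>K$.

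Next I would assemble the tiling. The $x$-direction is automatic: $\varphi_\alpha$ carries each cylinder onto all of $I_\alpha$ except on the two cylinders meeting $\alpha$ and $\alpha-2$, where Lemma \ref{Lemma1}(iv) makes the images split at the single abscissa $\varphi_\alpha^2(\alpha)=\varphi_\alpha^2(\alpha-2)$. The real work is in the $y$-direction. The level $[1,G)$, namely $\Omega_{III;\alpha}$, is covered entirely by $\Phi_\alpha(\Omega_{1;\alpha})$; the level $(2-G,1]$, namely $\Omega_{II;\alpha}$, is covered by $\Phi_\alpha(\Omega_{2;\alpha})$ on $(\frac12,1]$ and, on $(2-G,\frac12]$, by $\Phi_\alpha(\Omega_{7;\alpha})$ in the second sub-case or by the $k=K$ gluing described below in the first; and on the level $[0,2-G)$, namely $\Omega_{I;\alpha}$, the three families of $y$-intervals produced by $\Omega_{3;\alpha}$, $\Omega_{4;\alpha}$ and $\Omega_{5;\alpha}\cup\Omega_{6;\alpha}$ must dovetail, the endpoints $\frac{1}{2k+1},\frac{1}{2k-1+G},\frac{1}{2k+2},\frac{1}{2k+1+G}$ interlacing in decreasing order so as to partition $[0,2-G)$ without gap or overlap and accumulate at $0$.

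The crux is the matching of the two partial strips at the critical index $k=K$, where the relation $K=L+1$ (a consequence of \eqref{2.2}, noted just before \eqref{2.3}) is decisive. Both sit at the $y$-level $(\frac{1}{2K-1+G},\frac1{2K}]$: the partial piece of $\Phi_\alpha(\Omega_{5;\alpha})$ lies over $(\varphi_\alpha^2(\alpha),\alpha)$, and that of $\Phi_\alpha(\Omega_{6;\alpha})$ lies over $[\alpha-2,\varphi_\alpha^2(\alpha))$ when $K\ge2$, resp.\ over $(\varphi_\alpha(\alpha),\varphi_\alpha^2(\alpha)]$ when $K=1$. Since they split at the common abscissa $\varphi_\alpha^2(\alpha)=\varphi_\alpha^2(\alpha-2)$, they glue, up to the null line $\{x=\varphi_\alpha^2(\alpha)\}$, into the strip that $\Omega_\alpha$ requires at that level: the full $I_\alpha\times(\frac{1}{2K-1+G},\frac1{2K}]$ inside $\Omega_{I;\alpha}$ when $K\ge2$, and $(\varphi_\alpha(\alpha),\alpha)\times(2-G,\frac12]$ inside $\Omega_{II;\alpha}$ when $K=1$ (the sub-case in which $\Omega_{7;\alpha}$ is absent). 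I expect this half-open endpoint bookkeeping --- keeping the $e=-1$ strips separate from the $e=+1$ strips and verifying the $k=K$ gluing against $K=L+1$ --- to be the only delicate point; the remaining disjointness check is identical to the one closing the proof of Lemma \ref{Lemma2}.
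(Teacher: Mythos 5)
Your proposal is correct and takes essentially the same route as the paper: the same two sub-cases according to whether $\varphi_\alpha(\alpha-2)$ exceeds $\frac{1}{1+\alpha}$, the same computations of $\Phi_\alpha(\Omega_{6;\alpha})$ and $\Phi_\alpha(\Omega_{7;\alpha})$, and the same gluing of the two partial strips at the level indexed by $K$ using $K=L+1$ and $\varphi_\alpha^2(\alpha)=\varphi_\alpha^2(\alpha-2)$. One phrasing slip worth fixing: the cylinders whose $\varphi_\alpha$-images split at $\varphi_\alpha^2(\alpha)$ are the truncated ones containing $\varphi_\alpha(\alpha)$ and $\varphi_\alpha(\alpha-2)$ (inside $\Omega_{5;\alpha}$ and $\Omega_{6;\alpha}$), not the cylinders meeting $\alpha$ and $\alpha-2$; since your detailed computations and the crux paragraph use the correct ones, this does not affect the argument.
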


\begin{proof}
We consider the two possible situations mentioned above:

(i) $\frac{1}{1+\alpha} < \varphi_\alpha (\alpha-2) <\alpha$,
which corresponds to $\frac{1}{2}(\sqrt{13}-1) <\alpha <G$ and yields
$L=0$, $K=1$, and $\varphi_\alpha^2 (\alpha-2)=\frac{3-2\alpha}{\alpha-1}$.
We have $\Phi_\alpha (\Omega_{6;\alpha}) =( \varphi_\alpha (\alpha),\varphi_\alpha^2 (\alpha-2) ]
\times ( 2-G,\frac{1}{2}]$ and
\begin{equation*}
\Phi_\alpha (\Omega_{5;\alpha} \cup \Omega_{6;\alpha} \cup \Omega_{2;\alpha}) = [\varphi_\alpha (\alpha),\alpha) \times (2-G,1]
\cup I_\alpha \times \bigcup_{k\geq 2} \left( \frac{1}{2k+1+G},\frac{1}{2k+2}\right] ,
\end{equation*}
which lead in conjunction with \eqref{2.3} to the desired result.

(ii) $\varphi_\alpha (\alpha-2) \leq \frac{1}{1+\alpha}$, which yields $L=K-1\geq 1$.
In this case we find
\begin{equation*}
\begin{split}
\Phi_\alpha (\Omega_{6;\alpha}) & = [ \alpha -2, \varphi_\alpha^2 (\alpha-2)] \times
\left( \frac{1}{2L+1+G},\frac{1}{2L+2}\right] \cup I_\alpha \times
\bigcup_{1\leq \ell <L} \left( \frac{1}{2\ell+1+G},\frac{1}{2\ell +2}\right] ,
 \\
\Phi_\alpha (\Omega_{7;\alpha}) & = (\varphi_\alpha (\alpha),\alpha ) \times \left( 2-G,\frac{1}{2}\right] .
\end{split}
\end{equation*}
Employing also $K=L+1$, $\varphi_\alpha^2 (\alpha)=\varphi_\alpha^2 (\alpha -2)$ and \eqref{2.3},
we establish the desired result.
\end{proof}


\begin{remark}\label{Remark4}
When $\alpha \in \{ g,1,G\}$, the analogues of Lemmas \ref{Lemma2} and \ref{Lemma3}
still hold and are checked in a similar way.

When $\alpha =g$, corresponding to $\alpha \searrow g$, we take
$\Omega_g=[g-2,g) \times [0,1-g) \cup[\frac{g-1}{2-g},g) \times [1,G)$,
partitioned into $\Omega_{1;g}=[g-2,-g) \times [0,2-g)$,
$\Omega_{3;g}=[-g,0) \times [0,2-G)$, $\Omega_{4;g}=(0,g] \times [0,2-G)$,
$\Omega_{5;g}=(0,g] \times [1,G)$, $\Omega_{7;g}=(\frac{g-1}{2-g},0) \times [1,G)$.

When $\alpha =G$, corresponding to $\alpha \nearrow G$, we take
$\Omega_G=[G-2,G) \times [0,1)$,
partitioned into $\Omega_{2;G}=(2-G,G) \times [0,1)$;
$\Omega_{3;G}=[G-2,0) \times [0,2-G)$, $\Omega_{4;G}=(0,2-G] \times [0,1)$,
$\Omega_{5;G}=[G-2,0) \times (2-G,1]$.

When $\alpha =1$ we can take,
as in Lemma \ref{Lemma2}, $\Omega_1 =[-1,1) \times [0,2-G) \cup [0,1)\times [2-G,G)$,
partitioned into
$\Omega_{1;1}=[-1,-\frac{1}{2}) \times [0,2-G)$,
$\Omega_{2;1}=(\frac{1}{2},1) \times [0,G)$,
$\Omega_{3;1}=[-\frac{1}{2},0) \times [0,2-G)$,
$\Omega_{4;1}=(0,\frac{1}{2}] \times [0,2-G)$,
$\Omega_{5;1}=(0,\frac{1}{2}] \times [1,G)$;
$\Omega_{6;1} =(0,\frac{1}{2}] \times [2-G,1)$.


\end{remark}

\newpage

\section{The $\alpha$-OCF expansions and growth of denominators}\label{alpha_ocf}
The next two lemmas will be helpful in estimating the rate of growth of
the denominators of convergents of $x$. For each $(x,y)\in \Omega_\alpha \setminus \Q^2$
we denote $(x_k,y_k)= \Phi^k_\alpha (x,y)$.

\begin{lemma}\label{Lemma5}
Assume $g< \alpha \leq 1$. For every $(x,y)\in \Omega_\alpha \setminus \Q^2$, at least one of the next
five inequalities holds:
\begin{equation*}
\begin{split}
& y_0 \leq 2-G <\frac{1}{\sqrt{2}}, \quad
y_0y_1 \leq \frac{G}{5-G} <\frac{1}{2}, \quad
y_0y_1y_2 \leq \frac{1}{3} < \frac{1}{2\sqrt{2}} ,\\
& y_0y_1y_2y_3 \leq \frac{G}{7+4G} < \frac{1}{4},\quad
\mbox{or} \quad y_0 y_1 y_2 y_3 y_4  \leq \frac{1}{5G-2} < \frac{1}{4\sqrt{2}} .
\end{split}
\end{equation*}
\end{lemma}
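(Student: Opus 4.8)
The plan is to reduce everything to the skew-shift recursion $y_{k+1}=\frac{1}{d_k+e_ky_k}$, where $d_k=d_\alpha(x_k)$ and $e_k=e(x_k)$, and to read off sharp bounds region by region. The crucial observation is that the partition of $\Omega_\alpha$ in Lemma \ref{Lemma2} pins down the \emph{pair} $(d_k,e_k)$ \emph{together with} the range of $y_k$ from the region containing $(x_k,y_k)$. Since $y_ky_{k+1}=\frac{y_k}{d_k+e_ky_k}$ is monotone in both $y_k$ and $d_k$, each region yields a clean estimate on a one-step (or few-step) product. Throughout I will use $G^2=G+1$, hence $\frac{1}{1+G}=2-G$ and $\frac{G}{1+G}=\frac1G$. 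I organize the argument by the region of $(x_0,y_0)$.

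The first two cases are immediate. On $\Omega_{1;\alpha},\Omega_{3;\alpha},\Omega_{4;\alpha}$, and on the lower part of $\Omega_{2;\alpha}$, one has $y_0<2-G$, giving the first inequality. On $\Omega_{5;\alpha},\Omega_{6;\alpha},\Omega_{7;\alpha}$ the digit already satisfies $d_0\ge 3$, and a single step suffices: from $y_0<G$ and monotonicity of $\frac{y_0}{d_0+e_0y_0}$ I get $y_0y_1<\frac{G}{5-G}$ in each case, so the second inequality holds. The extremal constant $\frac{G}{5-G}$ comes from $\Omega_{7;\alpha}$ (where $e_0=-1$, $d_0=5$, $y_0\to G$); on $\Omega_{5;\alpha}$ the bound is smaller because $3+G>5-G$, and on $\Omega_{6;\alpha}$ it is smaller because there $y_0<1$.

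The only genuinely hard region is $\Omega_{2;\alpha}$ with $y_0\in[2-G,G)$, where $(d_0,e_0)=(1,+1)$ and a single step does \emph{not} force decay: $y_0y_1=\frac{y_0}{1+y_0}$ can be as large as $\frac1G$. Here the orbit may stay in the digit-one regime for several steps, and this is where the work lies. Since $\varphi_\alpha(\alpha)\ge 0$ by \eqref{2.1}, each such $x_k$ is nonnegative, so $e_k=+1$ throughout; by Lemma \ref{Lemma2} each such point maps into $\Omega_{II;\alpha}$, and for $k\ge1$ one checks $y_k\le\frac{1}{3-G}<1$, so the orbit can only leave $\Omega_{2;\alpha}$ by entering $\Omega_{6;\alpha}$ (digit $\ge 3$, sign $+1$), never $\Omega_{5;\alpha}$ or $\Omega_{7;\alpha}$. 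The engine is a telescoping identity along a pure $(1,+1)$-string: iterating $y_{k+1}=\frac{1}{1+y_k}$ gives
\[
y_0y_1=\frac{y_0}{1+y_0},\quad y_0y_1y_2=\frac{y_0}{2+y_0},\quad y_0y_1y_2y_3=\frac{y_0}{3+2y_0},\quad y_0y_1y_2y_3y_4=\frac{y_0}{5+3y_0},
\]
the coefficients being consecutive Fibonacci numbers.

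Finally I case on the first exit time $j\ge1$ from $\Omega_{2;\alpha}$ (or the absence of an exit within five steps), multiplying the telescoped product by the extra factor $y_{j+1}=\frac{1}{d_j+y_j}\le\frac{1}{3+y_j}$ and simplifying. Exit at $j=1$ gives $y_0y_1y_2<\frac1G\cdot\frac13<\frac13$ (third inequality); exit at $j=2$ gives, after clearing denominators, $y_0y_1y_2y_3\le\frac{y_0}{7+4y_0}<\frac{G}{7+4G}$ (fourth inequality); exit at $j=3$ gives $y_0\cdots y_4\le\frac{y_0}{11+7y_0}<\frac{G}{5+3G}$; and exit at $j\ge4$ or no exit gives $y_0\cdots y_4=\frac{y_0}{5+3y_0}<\frac{G}{5+3G}$ (fifth inequality). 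The two sharp constants reconcile via $G^2=G+1$: indeed $\frac{G}{5+3G}=\frac{1}{5G-2}$, since $G(5G-2)=5G^2-2G=5+3G$. The main obstacle is precisely this $\Omega_{2;\alpha}$ analysis: spotting the telescoping Fibonacci structure, proving the orbit can only exit into $\Omega_{6;\alpha}$, and checking that each admissible exit time lands in exactly one of the five prescribed inequalities with the stated constant.
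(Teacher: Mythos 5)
Your proof is correct and is essentially the paper's own argument: the same decomposition of $\Omega_\alpha$ (first inequality on $\Omega_{I;\alpha}$, one-step decay on $\Omega_{5;\alpha},\Omega_{6;\alpha},\Omega_{7;\alpha}$), and your exit-time/Fibonacci analysis of the digit-one string in $\Omega_{2;\alpha}$ is exactly the paper's nested sub-cases, producing the identical bounds $\frac{y_0}{4+3y_0}$, $\frac{y_0}{7+4y_0}$, $\frac{y_0}{5+3y_0}\le\frac{G}{5+3G}=\frac{1}{5G-2}$ (your uniform treatment of $y_0\in[2-G,G)$ merely merges the paper's cases (b) and (e), and your explicit split at exit time $j=3$ versus no exit is slightly more careful than the paper's final step). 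One point to tighten: on $\Omega_{7;\alpha}$ your blanket hypothesis $d_0\ge 3$ does not suffice, since with $e_0=-1$ it would only give $y_0y_1\le \frac{G}{3-G}>1$; you need $d_0\ge 5$ there (as your parenthetical tacitly assumes, and as the paper asserts), which follows from Lemma \ref{Lemma1}(ii): by \eqref{2.1} one has $x_0\in[\varphi_\alpha(\alpha-2),0)\subseteq\left(-\frac{1}{3+\alpha},0\right)$, hence $\frac{1}{2|x_0|}+\frac{1-\alpha}{2}>2$ and $d_\alpha(x_0)\ge 5$.
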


\begin{proof}
The following five situations can occur:

(a) $(x,y)\in\Omega_{I;\alpha}$, when we clearly have $y<2-G <\frac{1}{\sqrt{2}}$.

(b) $\varphi_\alpha(\alpha) <x\leq \frac{1}{1+\alpha}$ and $2-G\leq y\leq 1$. Then we have $d_\alpha (x)\geq 3$, $e(x)=1$, so
$y_1\leq \frac{1}{3+y}$ and $yy_1 \leq \frac{y}{3+y} \leq \frac{1}{4}$.
When $\frac{1}{1+\alpha} <x<\alpha$ and $2-G \leq y\leq 1$, we have $y_1=\frac{1}{1+y}$
and $\varphi_\alpha (\alpha)<x_1 =\frac{1}{x}-1 <\alpha$, so $y_2 \leq \frac{1}{1+y_1} =\frac{1+y}{2+y}$ and
$yy_1y_2 \leq \frac{y}{2+y} \leq \frac{1}{3}$.

(c) $0<x\leq \frac{1}{1+\alpha}$ and $1\leq y\leq G$. Then we have $d_\alpha (x)\geq 3$, $e(x)=1$, so $y_1\leq \frac{1}{3+y}$ and
$yy_1 \leq \frac{y}{3+y} \leq \frac{G}{3+G} <\frac{1}{2}$.

(d) $\varphi_\alpha (\alpha-2) \leq x<0$ and $1\leq y\leq G$. Then we have $d_\alpha (x)\geq 5$, $e(x)=-1$, so $yy_1\leq \frac{y}{5-y}
\leq \frac{G}{5-G}$.

(e) When $\frac{1}{1+\alpha} <x<\alpha$ and $1\leq y\leq G$, we have $(x_1,y_1) \in \Omega_{II;\alpha}$, $y_1 = \frac{1}{1+y}$,
$yy_1 =\frac{y}{1+y} \leq \frac{G}{1+G}$, $y_2 =\frac{1}{d_2+y_1} \leq \frac{1}{1+y}$, and one of the following two situations holds:
(e.1) $0\leq y\leq 1$, when $yy_1 y_2 \leq \frac{y}{2+y} \leq \frac{1}{3}$;
(e.2) $1\leq y\leq G$, when either (e.2.1) $\varphi_\alpha (\alpha) < x_1 < \frac{1}{1+\alpha}$,
so $y_2 \leq \frac{1}{3+y_1}$ and $yy_1y_2 \leq \frac{y}{4+3y}\leq \frac{G}{4+3G}$,
or (e.2.2) $\frac{1}{1+\alpha} <x_1 <\alpha$, so $y_2= \frac{1}{1+y_1} = \frac{1+y)}{2+y}$ and
$yy_1y_2= \frac{y}{2+y}$. In sub-case (e.2.2) two situations can again occur:
(e.2.2.1) $\varphi_\alpha (\alpha) <x_2 < \frac{1}{1+\alpha}$, so $y_3 \leq \frac{1}{3+y_2}$ and
$yy_1y_2y_3 \leq \frac{y}{7+4y} \leq \frac{G}{7+4G}$, or
(e.2.2.2) $\frac{1}{1+\alpha} <x_2 <\alpha$, so $y_3=\frac{1}{1+y_2}$ and
$yy_1y_2y_3 =\frac{y}{3+2y}$. Furthermore, since $\varphi_\alpha (\alpha) < x_3 <\alpha$
we have $y_4=\frac{1}{1+y_3} =\frac{3+2y}{5+3y}$ and
$y y_1y_2y_3y_4= \frac{y}{5+3y} \leq \frac{G}{5+3G}= \frac{1}{5G-2}$, concluding the proof.
\end{proof}

\begin{lemma}\label{Lemma6}
Assume $1\leq \alpha < G$. For every $(x,y)\in \Omega_\alpha \setminus \Q^2$,
at least one of the next five inequalities holds:
\begin{equation*}
\begin{split}
& y_0 \leq 2-G <\frac{1}{\sqrt{2}}, \quad
y_0y_1 \leq \frac{G}{3+G} <\frac{1}{2}, \quad
y_0y_1y_2 \leq \frac{1}{3} < \frac{1}{2\sqrt{2}} ,\\
& y_0y_1 y_2 y_3 \leq \frac{G}{5+2G} < \frac{1}{4},\quad
\mbox{or} \quad y_0 y_1 y_2 y_3 y_4  \leq \frac{1}{5G-2} < \frac{1}{4\sqrt{2}} .
\end{split}
\end{equation*}
\end{lemma}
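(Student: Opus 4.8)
The plan is to follow the proof of Lemma~\ref{Lemma5} and argue by cases according to which piece of the partition $\Omega_{1;\alpha},\dots,\Omega_{7;\alpha}$ of $\Omega_\alpha$ (for $1<\alpha<G$) the point $(x_0,y_0)$ lies in, tracking the first few iterates through the recursion $y_{k+1}=\frac{1}{d_{k+1}+e_{k+1}y_k}$ with $d_{k+1}=d_\alpha(x_k)$, $e_{k+1}=e(x_k)$. Two cases are immediate. If $y_0<2-G$ (which covers $\Omega_{1;\alpha}$, $\Omega_{3;\alpha}$ and the lower halves of $\Omega_{2;\alpha}$, $\Omega_{4;\alpha}$) the first inequality holds outright. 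If $0<x_0\le\frac{1}{1+\alpha}$ with $y_0\ge 2-G$, then $d_1\ge 3$ and $e_1=1$, so $y_0y_1=\frac{y_0}{d_1+y_0}\le\frac{y_0}{3+y_0}\le\frac{G}{3+G}$ because $y_0<G$; this is the second inequality, and it is here that the value $\frac{G}{3+G}$ is approached.

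Next I would dispose of the negative block $\Omega_{5;\alpha}=(\varphi_\alpha(\alpha),0)\times(2-G,1]$, where $e_1=-1$ and $d_1\ge 2K+1$. If $K\ge 2$ then $d_1\ge 5$ and $y_0y_1=\frac{y_0}{d_1-y_0}\le\frac14<\frac{G}{3+G}$ already, giving the second inequality. If $K=1$ (so $\alpha$ is near $G$ and $d_1$ may equal $3$) one step only yields $y_1\le\frac12$, so I would take a second step: from \eqref{2.3} one reads off that $2-G<y_1\le\frac12$ and that $x_1$ returns to $I_\alpha$, and a short case check on the region of $x_1$ shows $y_0y_1y_2\le\frac{y_0}{4-y_0}\le\frac13$ in the chaining branch and strictly less otherwise, which is the third inequality.

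The main obstacle is the self-similar chaining through the cylinder $\langle 1\rangle_\alpha=(\frac{1}{1+\alpha},\alpha)$, on which $d=1$, $e=1$, $y_{k+1}=\frac{1}{1+y_k}$, and $\varphi_\alpha$ maps $\langle 1\rangle_\alpha$ onto $(\varphi_\alpha(\alpha),\alpha)$; this is the analogue of sub-case~(e) in Lemma~\ref{Lemma5}. The clean engine is the telescoping identity $y_0y_1\cdots y_j=\frac{y_0}{F_{j+1}+F_jy_0}$, valid when steps $1,\dots,j$ are all chain steps ($F_j$ the Fibonacci numbers, $F_1=F_2=1$). I would follow the orbit of a point $x_0\in\langle 1\rangle_\alpha$ with $y_0\ge 2-G$ and split according to the first $j\in\{1,2,3\}$ (if any) for which $x_j\notin\langle 1\rangle_\alpha$, necessarily with $x_j\in(0,\frac{1}{1+\alpha}]$ ($d\ge3$, $e=1$) or $x_j\in(\varphi_\alpha(\alpha),0)$ ($d\ge3$, $e=-1$). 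Multiplying the telescoped product by the exit bound $\frac{1}{3\pm y_j}$ gives, for $j=1$, the triple product $\frac{y_0}{4+3y_0}$ or $\frac{y_0}{2+3y_0}$, both $\le\frac13$; for $j=2$, the four-fold product $\frac{y_0}{7+4y_0}$ or $\frac{y_0}{5+2y_0}\le\frac{G}{5+2G}$ (the minus branch attaining the bound as $y_0\to G$); and for $j=3$ a five-fold product strictly smaller than $\frac{y_0}{5+3y_0}$. If all of $x_0,x_1,x_2,x_3$ stay in $\langle 1\rangle_\alpha$, the pure four-fold chain already gives $y_0y_1y_2y_3y_4=\frac{y_0}{5+3y_0}\le\frac{G}{5+3G}=\frac{1}{5G-2}$, the fifth inequality; so the recursion never has to be carried beyond four chain steps.

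The delicate point to watch is that every iterate $(x_k,y_k)$ lies in $\Omega_\alpha$, whence $y_k<G$ and, crucially, the digit pattern $(d,e)=(1,-1)$ cannot occur while $y\ge 2-G$: the only piece with $x\in\langle -1\rangle_\alpha$ is $\Omega_{1;\alpha}$, and it sits below $y=2-G$. This invariance is exactly what forces $y_{k+1}=\frac{1}{1+y_k}$ (not $\frac{1}{1-y_k}$) along a chain, and what guarantees $2-G<y_j\le 1$ at the $\Omega_{5;\alpha}$-type exits so that the displayed quantities are the extreme values. The relations of Lemma~\ref{Lemma1}(iv) together with $K=L+1$ and $\varphi_\alpha^2(\alpha)=\varphi_\alpha^2(\alpha-2)$ serve only to locate the boundary orbits $\varphi_\alpha^k(\alpha)$, as in Lemmas~\ref{Lemma2} and~\ref{Lemma3}; finally the endpoint $\alpha=1$ has $\Omega_{5;\alpha}=\emptyset$ and is handled by the Remark~\ref{Remark4} partition, without the minus-branch complications.
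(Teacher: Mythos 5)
Your proposal is correct and follows essentially the same route as the paper's proof: a case analysis over the pieces of $\Omega_\alpha$, iterating $y_{k+1}=\frac{1}{d_{k+1}+e_{k+1}y_k}$ at most five times, using that exits from $\langle 1\rangle_\alpha$ land in $(\varphi_\alpha(\alpha),\frac{1}{1+\alpha}]$ where $d\geq 3$, and arriving at the same extremal products $\frac{G}{3+G}$, $\frac{y}{2+y}$, $\frac{G}{5+2G}$, $\frac{1}{5G-2}$. Your Fibonacci telescoping identity and first-exit-time bookkeeping merely repackage (and slightly streamline) the paper's nested sub-cases (a), (b), (c), (d.1)--(d.3.2.2), unifying the strips $2-G\leq y_0\leq 1$ and $1\leq y_0<G$ that the paper treats separately.
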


\begin{proof}
We can assume $2-G \leq y \leq G$. When $0< x\leq \frac{1}{1+\alpha}$ and
$2-G \leq y\leq 1$, we have $d_\alpha (x) \geq 3$,
$e(x)=1$ and $yy_1 \leq \frac{y}{3+y} \leq \frac{1}{4}$.
Four more situations can occur:

(a) $\frac{1}{1+\alpha} <x<\alpha$ and $2-G\leq y\leq 1$, Then we have
$y_1= \frac{1}{1+y}$ and $-\frac{1}{1+\alpha} < \frac{1}{\alpha} -1 =\varphi_\alpha (\alpha)
<x_1 = \frac{1}{x}-1 <\alpha$. Two sub-cases can occur: (a.1) $x_1 >0$, so
$y_2 \leq \frac{1}{1+y_1}=\frac{1+y}{2+y}$ and
$yy_1y_2 =\frac{y}{2+y} \leq \frac{1}{3}$;
(a.2) $x_1 <0$, so $d_\alpha (x_1) \geq 3$, $e(x_1)=-1$ and
$y_2 \leq \frac{1}{3-y_1} =\frac{1+y}{2+3y}$,
$yy_1y_2 \leq \frac{y}{2+3y} \leq \frac{1}{5}$.

(b) $\varphi_\alpha (\alpha) <x<0$ and $2-G \leq y\leq 1$. Then we have $d_\alpha (x) \geq 3$,
$e(x)=-1$, so $y_1 \leq \frac{1}{3-y}$. Two sub-cases can occur:
(b.1) $x\geq  -\frac{1}{3+\alpha}$, so $d_\alpha (x) \geq 5$, $e(x)=-1$, and
$y_1 \leq \frac{1}{5-y}$, $yy_1 \leq \frac{y}{5-y} \leq \frac{1}{4}$;
(b.2) $-\frac{1}{1+\alpha} \leq \varphi_\alpha (\alpha) < x < -\frac{1}{3+\alpha}$.
Then $d_\alpha (x)=3$, $e(x)=-1$, $y_1=\frac{1}{3-y}$ and
$\frac{3-2\alpha}{\alpha-1} < x_1 =\frac{1}{| x|} -3 <\alpha$.
If $x_1 >0$, then $y_2 \leq \frac{1}{1+y_1} =\frac{3-y}{4-y}$ and
$yy_1y_2 =\frac{y}{4-y} \leq \frac{1}{3}$. On the other hand,
since $-\frac{1}{1+\alpha} \leq \frac{3-2\alpha}{\alpha-1}$, notice that when
$\frac{3-2\alpha}{\alpha-1} < x_1 <0$ we must have $d_\alpha (x_1)\geq 3$, $e(x_1)=-1$,
and so $y_2 \leq \frac{1}{3-y_1} =\frac{3-y}{8-3y}$ and
$yy_1y_2 \leq \frac{y}{8-3y} \leq \frac{1}{5}$.

(c) When $\varphi_\alpha (\alpha-2)\leq x\leq \frac{1}{1+\alpha}$ and $1\leq y\leq G$, we have
$d_\alpha (x)\geq 3$, $e(x)=1$, so $y_1 \leq \frac{1}{3+y}$ and
$yy_1 \leq \frac{y}{3+y} \leq \frac{G}{3+G} < \frac{1}{2}$.

(d) When $\varphi_\alpha (\alpha-2) < \frac{1}{1+\alpha} < x <\alpha$ and $1<y<G$, we have
$y_1 = \frac{1}{1+y}$ and $\frac{1}{\alpha}-1 < x_1= \frac{1}{x}-1 < \alpha$.
Three sub-cases can occur: (d.1) $\varphi_\alpha (\alpha) < x_1 <0$, so $d_\alpha (x_1) \geq 3$, $e(x_1)=-1$,
$y_2 \leq \frac{1}{3-y_1} = \frac{1+y}{2+3y}$,
$yy_1y_2 \leq \frac{y}{2+3y} \leq \frac{G}{2+3G} < \frac{1}{3}$;
(d.2) $0< x_1 < \frac{1}{1+\alpha}$, so $y_2 \leq \frac{1}{3+y_1} \leq
\frac{1}{3-y_1}$ and $yy_1y_2 \leq \frac{G}{2+3G}$ as in (d.1);
(d.3) $\frac{1}{1+\alpha} < x_1 < \alpha$, so $y_2=\frac{1}{1+y_1} =\frac{1+y}{2+y}$
and $\frac{1}{\alpha} -1 < x_2 = \frac{1}{x_1}-1 <\alpha$.
Two more sub-cases can occur, as follows:
(d.3.1) $\frac{1}{\alpha} -1 < x_2 < \frac{1}{1+\alpha}$, when as in cases (d.1) and (d.2) above we get
$y_3 \leq \frac{1}{3-y_2}$ and $yy_1y_2y_3 \leq \frac{y}{5+2y} \leq \frac{G}{5+2G}$;
(d.3.2) $\frac{1}{1+\alpha} < x_2 <\alpha$, so $y_3= \frac{1}{1+y_2} =\frac{2+y}{3+2y}$ and
$\frac{1}{\alpha}-1 <x_3 =\frac{1}{x_2}-1 < \alpha$. Two sub-cases can occur here:
(d.3.2.1) $\frac{1}{\alpha}-1 < x_3 < \frac{1}{1+\alpha}$, so
$y_4 \leq \frac{1}{3-y_3} =\frac{3+2y}{7+5y}$ and
$yy_1y_2y_3y_4 =\frac{y}{7+5y} \leq \frac{G}{7+5G} < \frac{G}{5+3G}$;
(d.3.2.2) $\frac{1}{1+\alpha} < x_3 < \alpha$, so
$y_4= \frac{1}{1+y_3} =\frac{3+2y}{5+3y}$ and
$yy_1y_2y_3y_4 =\frac{y}{5+3y} \leq \frac{G}{5+3G}$, concluding the proof.
\end{proof}

\begin{remark}\label{Remark7}
Lemma \ref{Lemma5} also works for $\alpha=g$ and Lemma \ref{Lemma6} for $\alpha=G$.
\end{remark}

With $d_\alpha$ as in \eqref{1.2} and $e(x)=\operatorname{sign}(x)$, we define
\begin{equation*}
d_i(x;\alpha) = d_\alpha (\varphi_\alpha^{i-1}(x)),\quad e_i(x;\alpha)=e(\varphi_\alpha^{i-1}(x))
\qquad \mbox{\rm if $\varphi_\alpha^{i-1}(x)\neq 0$,}
\end{equation*}
and also let $d_i(x;\alpha)=\infty$, $e_i(x;\alpha)=0$ if $\varphi_\alpha^{i-1}(x)=0$.
We also define
\begin{equation*}
\omega_{\alpha,i} (x) :=e_i (x;\alpha) d_i(x;\alpha)\in 2\Z -1 ,\qquad i\geq 1,
\end{equation*}
and consider the cylinder sets
\begin{equation*}
\begin{split}
\langle \omega_1,\ldots,\omega_n \rangle_\alpha & :=
\langle \omega_1 \rangle_\alpha \cap \varphi_\alpha^{-1} (\langle \omega_2 \rangle_\alpha)
\cap \ldots \cap \varphi_\alpha^{-(n-1)} (\langle \omega_n \rangle_\alpha ) \\
& =\{ x\in I_\alpha : \omega_{\alpha,1} (x)=\omega_1, \ldots, \omega_{\alpha,n}(x)=\omega_n \} ,
\end{split}
\end{equation*}
with $\omega_i$ positive or negative odd integers.

Define $p_n=p_n(x;\alpha)$ and $q_n=q_n (x;\alpha)$ by
\begin{equation*}
\begin{split}
& p_{-1} =1,\quad p_0=0,\quad p_1 =e(x) ,\\
& q_{-1} =0,\quad q_0  =1,\quad q_1 =d_{\alpha} (x) ,\\
& \left( \begin{matrix} p_n & q_n \\ p_{n-1} & q_{n-1} \end{matrix}\right) =
\left( \begin{matrix} d_n & e_n \\ 1 & 0 \end{matrix} \right)
\left( \begin{matrix} p_{n-1} & q_{n-1} \\ p_{n-2} & q_{n-2} \end{matrix} \right) ,
\end{split}
\end{equation*}
with $d_n=d_n (x;\alpha)$ and $e_n =e_n (x;\alpha)$.
Whenever $x\in I_\alpha$ with $\varphi_\alpha^i (x)\neq 0$, $i=0,\ldots,n-1$, we have
\begin{equation*}
x=\polter{e_1}{d_1}+\polter{e_2}{d_2}+\cdots+\polter{e_n}{d_n+\varphi_\alpha^n (x)} \qquad\mbox{\rm and} \qquad
\frac{p_n}{q_n} =\polter{e_1}{d_1}+\polter{e_2}{d_2}+\cdots+\polter{e_n}{d_n} .
\end{equation*}
The following standard properties hold:
\begin{equation}\label{3.1}
p_{n-1}(x;\alpha)q_n (x;\alpha) -p_n (x;\alpha) q_{n-1}(x;\alpha) =(-1)^n
e_1 (x;\alpha)\cdots e_n (x;\alpha) ,
\end{equation}
\begin{equation}\label{3.2}
x= \frac{p_n(x;\alpha) +\varphi_\alpha^n (x) p_{n-1}(x;\alpha)}{q_n(x;\alpha)+\varphi_\alpha^n (x) q_{n-1}(x;\alpha)},
\end{equation}
\begin{equation}\label{3.3}
\varphi_\alpha^n (x) = \frac{q_n(x;\alpha)x-p_n (x;\alpha)}{-q_{n-1}(x;\alpha)x +p_{n-1}(x;\alpha)} ,
\end{equation}
\begin{equation}\label{3.4}
\Phi_\alpha^n (x,0)=\left( \varphi_\alpha^n (x),\frac{q_{n-1}(x;\alpha)}{q_n (x;\alpha)} \right) ,
\qquad \forall n\geq 0.
\end{equation}
Since $d_\alpha(x)+e(x)y>0$ for all $(x,y)\in\Omega_\alpha$, the expression for $\Phi_\alpha$
in \eqref{1.3} and formula \eqref{3.4} show
that $q_n(x;\alpha)>0$ for every $x\in \II_\alpha$ and $n\geq 1$.

\begin{proposition}\label{Prop8}
For every $x\in \II_\alpha$ and $n\in \N$ we have
\begin{itemize}
\item[(i)]
$\displaystyle \min \left\{ \frac{q_n(x;\alpha)}{q_{n+1}(x;\alpha)},
\frac{q_n(x;\alpha)}{q_{n+2}(x;\alpha)},
\frac{q_n(x;\alpha)}{q_{n+3}(x;\alpha)},
\frac{q_n(x;\alpha)}{q_{n+4}(x;\alpha)},
\frac{q_n(x;\alpha)}{q_{n+5}(x;\alpha)} \right\} \leq
\left( \frac{1}{5G-2} \right)^{1/5}$.
\item[(ii)]
$\displaystyle
q_n (x;\alpha) \geq q B^n$, where $B= (5G-2)^{1/5} \approx 1.43524 > \sqrt{2}$ and
$q=(\frac{g}{B})^4 \approx 0.03438$.
\end{itemize}
\end{proposition}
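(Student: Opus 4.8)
The plan is to read both parts off the product structure of the ratios $q_{i-1}/q_i$. Put $y_i:=q_{i-1}(x;\alpha)/q_i(x;\alpha)$; by \eqref{3.4} this is the second coordinate of $\Phi_\alpha^i(x,0)$, and since $x\in\II_\alpha$ is irrational every iterate $\varphi_\alpha^i(x)$ is irrational, so $\Phi_\alpha^i(x,0)\in\Omega_\alpha\setminus\Q^2$. Telescoping gives, for any $n$ and any $1\le j\le 5$,
\[
\frac{q_n}{q_{n+j}}=\prod_{i=n+1}^{n+j}\frac{q_{i-1}}{q_i}=\prod_{i=n+1}^{n+j}y_i .
\]
For (i) I would apply Lemma \ref{Lemma5} (for $g\le\alpha\le1$) or Lemma \ref{Lemma6} (for $1\le\alpha\le G$), and Remark \ref{Remark7} when $\alpha\in\{g,G\}$, to the point $\Phi_\alpha^{n+1}(x,0)$: its forward $y$-coordinates are $y_{n+1},y_{n+2},\dots$, so the five alternatives of the lemma say precisely that $q_n/q_{n+j}\le C_j$ for at least one $j\in\{1,\dots,5\}$, where $C_1=2-G$, $C_5=1/(5G-2)$, and $C_2,C_3,C_4$ are the intermediate constants appearing there. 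With $B=(5G-2)^{1/5}$ one checks $C_j\le B^{-1}$ for every $j$, so $\min_{1\le j\le5}q_n/q_{n+j}\le B^{-1}=(1/(5G-2))^{1/5}$, which is (i).

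Part (ii) needs two further ingredients. First, because $\Omega_\alpha\subseteq I_\alpha\times[0,G)$, the coordinate $y_n=q_{n-1}/q_n$ is $<G$, so $q_n>g\,q_{n-1}$ for every $n\ge1$ (recall $g=1/G$); this one-sided control bounds how fast the non-monotone sequence $q_n$ can drop. Second, I would sharpen the constants to $C_\ell\le B^{-\ell}$ for $\ell=1,\dots,5$ (with equality at $\ell=5$). Starting from index $1$ and applying Lemma \ref{Lemma5}/\ref{Lemma6} repeatedly to the points $\Phi_\alpha^{m_s+1}(x,0)$, I would build a partition $0=m_0<m_1<m_2<\cdots$ of the positive integers into consecutive blocks with $m_{s+1}-m_s=\ell_s\in\{1,\dots,5\}$ and $\prod_{i=m_s+1}^{m_{s+1}}y_i\le C_{\ell_s}\le B^{-\ell_s}$. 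Multiplying over the first $s$ blocks and using $\prod_{i=1}^{N}y_i=1/q_N$ yields $q_{m_s}\ge B^{m_s}$ at every block endpoint.

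To pass from block endpoints to an arbitrary $N$, I would locate $m_s\le N<m_{s+1}$, so that $0\le N-m_s\le\ell_s-1\le4$, and combine $q_{m_s}\ge B^{m_s}$ with the decay estimate $q_N>g^{\,N-m_s}q_{m_s}$:
\[
q_N> g^{\,N-m_s}B^{m_s}=\Big(\tfrac{g}{B}\Big)^{N-m_s}B^{N}\ge\Big(\tfrac{g}{B}\Big)^{4}B^{N}=qB^{N},
\]
using $g/B<1$ and $N-m_s\le4$; this is exactly (ii). The main obstacle I foresee is the verification of $C_\ell\le B^{-\ell}$: for $\ell=2,3$ the margins are narrow (the case $\ell=3$ reduces to $(5G-2)^3=47+80\sqrt5\le243$, and the case $\ell=2$ for $g\le\alpha\le1$ to $(5G-2)^2\le(5g-1)^5$), and the constants of Lemma \ref{Lemma5} and of Lemma \ref{Lemma6} must be checked separately, all using $G=\tfrac12(1+\sqrt5)$.
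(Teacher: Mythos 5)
Your proposal is correct and follows essentially the same route as the paper: part (i) is read off Lemmas \ref{Lemma5} and \ref{Lemma6} via \eqref{3.4}, and part (ii) uses the same block decomposition of the indices into segments of length $1$--$5$, the same key numerical verification that each block constant is at most $B^{-\ell}$ (the paper's assertion $A=\max\{A_1,A_2^{1/2},A_3^{1/3},A_4^{1/4},A_5^{1/5}\}=A_5^{1/5}$), and the same per-step bound $q_n> g\,q_{n-1}$ (from $q_{n-1}/q_n<G$) to pass from block endpoints to arbitrary $n$, yielding the identical constant $q=(g/B)^4$. Your explicit treatment of the narrow cases $\ell=2,3$ just spells out what the paper leaves as an unstated computation.
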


\begin{proof}
(i) This follows from Lemmas \ref{Lemma5} and \ref{Lemma6}, and identity \eqref{3.4}.

(ii) Denote $A_1=2-G$, $A_2 =G/(5-G)$, $A_3= 1/3$,
$A_4= G/(5+2G)$, $A_5= 1/(5G-2)$.
Fix $n\geq 5$. By (i) for every $x$ there exist $n_0=n_0(x) \in\{ n-4,n-3,n-2,n-1,n\}$
and non-negative integers $\alpha_1,\ldots,\alpha_5$ such that
$\alpha_1+2\alpha_2 + 3\alpha_3+4\alpha_4+5\alpha_5 =n_0$ and
\begin{equation*}
\frac{1}{q_{n_0}(x;\alpha)} =\frac{q_0(x;\alpha)}{q_{n_0}(x;\alpha)} \leq
A_1^{\alpha_1} \cdots A_5^{\alpha_5} \leq A^{n_0},
\end{equation*}
where $A:=\max\{ A_1,A_2^{1/2},A_3^{1/3},A_4^{1/4},A_5^{1/5}\} =A_5^{1/5}
=(5G-2)^{-1/5}$. Since $\frac{q_k (x;\alpha)}{q_{k+1}(x;\alpha)} \leq G$, we find that
$q_n(x;\alpha) \geq \min \{ A^{-n},gA^{-n+1},g^2 A^{-n+2},g^3 A^{-n+3},g^4 A^{-n+4}\}
=qA^{-n}=qB^n$.
\end{proof}

\begin{lemma}\label{Lemma9}
For every $x\in \II_\alpha$ and $n\in \N$ we have
\begin{equation}\label{3.5}
2G q_n (x;\alpha) \geq q_n(x;\alpha)+\varphi_\alpha^n (x) q_{n-1}(x;\alpha) \geq C_\alpha q_{n}(x;\alpha),
\end{equation}
where
\begin{equation*}
C_{\alpha} =\min_{(u,v)\in \Omega_\alpha} (1+xy) \geq 2(\sqrt{5}-2) >0.
\end{equation*}
\end{lemma}

\begin{proof}
Inequality \eqref{3.5} follows directly from \eqref{3.4}. The lower bound in $C_\alpha$ is
derived using the particular shape of $\Omega_\alpha$ as follows:
\begin{equation*}
\begin{split}
C_\alpha & =1+ \begin{cases}
\min\big\{ (2-G)(\alpha-2),\frac{G(\alpha-1)}{2-\alpha}\big\} &
\mbox{\rm if $g\leq \alpha \leq 1$} \\
\min \big\{ (2-G)(\alpha-2),\frac{1-\alpha}{\alpha}\big\} &
\mbox{\rm if $1\leq g\leq G$,}
\end{cases}  \\
& = \begin{cases}
1+ (2-G)(\alpha-2) &
\mbox{\rm if $g\leq \alpha \leq 1$} \\
1+\min \big\{ (2-G)(\alpha-2),\frac{1-\alpha}{\alpha}\big\}
=\frac{1}{\alpha_*}-1 &
\mbox{\rm if $1\leq g\leq G$,}
\end{cases}
\end{split}
\end{equation*}
where $\alpha_* =\frac{1}{2} (-G+1+\sqrt{G^2+2G+5})$ and $C_\alpha \geq 2(\sqrt{5}-2)$.
\end{proof}

\begin{proposition}\label{Prop10}
For every $\alpha \in [g,G]$, every $x\in \II_\alpha$,
and every $n\in\N$, there exist universal constants $c_1,c_2>0$ such that
\begin{equation*}
\frac{1}{q_n(x;\alpha) q_{n+1}(x;\alpha)} \leq
\left| x-\frac{p_n(x;\alpha)}{q_n(x;\alpha)} \right|
\leq \frac{c_1}{q_n^2 (x;\alpha)} \leq c_2 C^{-n},
\end{equation*}
where $C=( 5G-2)^{2/5} \approx2.05993$.
\end{proposition}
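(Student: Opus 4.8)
The plan is to reduce everything to the two exact convergent identities \eqref{3.1}--\eqref{3.2}, and then simply feed in the two-sided estimate of Lemma~\ref{Lemma9} together with the exponential growth $q_n \ge qB^n$ of Proposition~\ref{Prop8}. Throughout I would write $t_m := \varphi_\alpha^m (x)$, so that $t_m \in I_\alpha$ and $|t_m| < G$ for every $m$.

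First I would record two exact error identities. Applying \eqref{3.2} at level $n$ gives
\[
x - \frac{p_n}{q_n} = \frac{t_n\,(q_n p_{n-1} - p_n q_{n-1})}{q_n\,(q_n + t_n q_{n-1})},
\]
while applying it at level $n+1$ gives
\[
x - \frac{p_n}{q_n} = \frac{q_n p_{n+1} - p_n q_{n+1}}{q_n\,(q_{n+1} + t_{n+1} q_n)} .
\]
In both numerators the determinant has absolute value $|e_1\cdots e_n| = |e_1\cdots e_{n+1}| = 1$ by \eqref{3.1}, and by Lemma~\ref{Lemma9} the factors $q_n + t_n q_{n-1}$ and $q_{n+1} + t_{n+1} q_n$ are positive. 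Hence
\[
\left| x - \frac{p_n}{q_n}\right| = \frac{|t_n|}{q_n\,(q_n + t_n q_{n-1})} = \frac{1}{q_n\,(q_{n+1} + t_{n+1} q_n)} .
\]

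For the two upper bounds I would use the first identity. Since $|t_n| < G$ and Lemma~\ref{Lemma9} gives $q_n + t_n q_{n-1} \ge C_\alpha q_n$, we obtain $|x - p_n/q_n| \le G/(C_\alpha q_n^2)$; because $C_\alpha \ge 2(\sqrt{5}-2)$ uniformly in $\alpha$, the choice $c_1 = G/\big(2(\sqrt{5}-2)\big)$ is a universal constant. The final inequality is then immediate from Proposition~\ref{Prop8}(ii): as $q_n \ge qB^n$ and $C = B^2 = (5G-2)^{2/5}$, we have $q_n^2 \ge q^2 C^n$, so $c_2 = c_1/q^2$ works.

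For the lower bound I would use the second identity together with the upper half of Lemma~\ref{Lemma9}, namely $q_{n+1} + t_{n+1} q_n \le 2G\,q_{n+1}$, which already bounds the error below by a fixed multiple of $1/(q_n q_{n+1})$. The delicate point is the precise constant: comparing the error with the exact distance $|p_{n+1}/q_{n+1} - p_n/q_n| = 1/(q_n q_{n+1})$ between consecutive convergents, the sign of the determinant in \eqref{3.1} shows that $x$ and $p_{n+1}/q_{n+1}$ always lie on the same side of $p_n/q_n$, so whether $x$ is farther from $p_n/q_n$ than $p_{n+1}/q_{n+1}$ is governed exactly by the sign of the tail $t_{n+1} = \varphi_\alpha^{n+1}(x)$. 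Pinning down this sign --- equivalently, locating $\Phi_\alpha^{n+1}(x,0)$ within $\Omega_\alpha$ --- is the only subtle step, and I expect it to be the main obstacle; the remaining inequalities are a direct substitution of Lemma~\ref{Lemma9} and Proposition~\ref{Prop8} into the two error identities above.
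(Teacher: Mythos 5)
Your handling of the two right-hand inequalities is exactly the paper's argument: the middle bound comes from \eqref{3.2}, \eqref{3.1} and Lemma \ref{Lemma9}, and the last one from Proposition \ref{Prop8}(ii) together with $C=B^2$; that part is complete and correct.

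The substantive issue is the left-hand inequality, and your exact identity
\begin{equation*}
\left| x-\frac{p_n}{q_n}\right| \;=\; \frac{1}{q_n\bigl(q_{n+1}+t_{n+1}q_n\bigr)},
\qquad t_{n+1}=\varphi_\alpha^{n+1}(x),
\end{equation*}
settles it, though not in the direction you hoped. It shows that $|x-p_n/q_n|\ge \frac{1}{q_nq_{n+1}}$ holds if and only if $t_{n+1}\le 0$, and that sign cannot be forced: take $x=\frac12(\sqrt{13}-3)$, the fixed point of $x\mapsto \frac1x-3$. For every $\alpha\in[g,G]$ one checks $d_\alpha(x)=3$, $e(x)=+1$, so $\varphi_\alpha(x)=x$ and $t_{n+1}=x>0$ for all $n$; hence the stated inequality with constant $1$ fails for every $n$. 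So the ``main obstacle'' you identified is insurmountable because the literal statement is false. Indeed, the paper's own proof of this step contains a direction error at its very end: the claimed inequality $\frac{1}{q_{n+1}+(1-e_{n+1})q_{n-1}+\alpha q_n}>\frac{1}{q_{n+1}}$ would require $(1-e_{n+1})q_{n-1}+\alpha q_n<0$, which is impossible since all terms are nonnegative and $\alpha q_n>0$. What the paper's computation actually establishes is the same bound with a universal constant in place of $1$, and your route gives this cleanly: by the upper half of \eqref{3.5} applied at index $n+1$, $q_{n+1}+t_{n+1}q_n \le 2G\,q_{n+1}$, hence $|x-p_n/q_n|\ge \frac{1}{2G}\cdot\frac{1}{q_nq_{n+1}}$. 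This weaker form is all that is used later in the paper: Corollary \ref{Cor19}(ii) only takes $\frac1n\log$ of these quantities, so the constant is harmless. Your write-up should therefore keep the constant $\frac{1}{2G}$ (equivalently, restate the proposition with $\frac{c_0}{q_nq_{n+1}}$ on the left for a universal $c_0>0$) rather than attempt to pin down the sign of $t_{n+1}$.
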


\begin{proof}
The second inequality follows from \eqref{3.2}, \eqref{3.1} and Lemma \ref{Lemma9}.
The third inequality follows from Proposition \ref{Prop8}.

To prove the first inequality, denote $d_{n+1}:=d_{n+1} (x;\alpha)$,
$p_n:=p_n(x;\alpha)$, $q_n=q_n (x;\alpha)$, and $u=\varphi_\alpha^n (x)$.
The equality $d_{n+1}=2[ \frac{1}{2| u|} +\frac{1-\alpha}{2}]+1$ shows that
$\frac{1}{| u|} < d_{n+1}+\alpha$. In conjunction with equality \eqref{3.2} we infer
\begin{equation*}
\begin{split}
\left| x-\frac{p_n}{q_n} \right| & = \frac{| u|}{q_n | q_n +uq_{n-1}|}
=\frac{1}{q_n^2 \big( \frac{1}{| u|} +\frac{q_{n-1}}{q_n}\big)}
= \frac{1}{q_n \big( q_{n-1}+\frac{q_n}{| u|}\big)}  \\
& > \frac{1}{q_n ( q_{n-1}+(d_{n+1}+\alpha)q_n)} =
\frac{1}{q_n} \cdot \frac{1}{q_{n+1}+(1-e_{n+1})q_{n-1}+\alpha q_n}
> \frac{1}{q_n q_{n+1}} .  \qedhere
\end{split}
\end{equation*}
\end{proof}

\section{Ergodic properties of the map $\varphi_\alpha$}\label{alpha_ergodic}
In this section we explicitly describe the $\varphi_\alpha$-invariant Lebesgue absolutely continuous probability measure $\nu_\alpha$
and the natural extension of $(I_\alpha,\BB_{I_\alpha},\nu_\alpha,\varphi_\alpha)$, show that
$\varphi_\alpha$ is an exact endomorphism, and compute its entropy with respect to $\nu_\alpha$.

\begin{lemma}\label{Lemma11}
The measure $d\mu_\alpha =(1+xy)^{-2} dx dy$ is $\Phi_\alpha$-invariant on $\Omega_\alpha$
and $\mu_\alpha (\Omega_\alpha)=3\log G$.
\end{lemma}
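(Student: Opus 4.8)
The plan is to verify the two assertions separately: first that $d\mu_\alpha = (1+xy)^{-2}\,dx\,dy$ is $\Phi_\alpha$-invariant, and second that $\mu_\alpha(\Omega_\alpha) = 3\log G$. For invariance, I would exploit the piecewise-structure of $\Phi_\alpha$ already laid out in Lemmas \ref{Lemma2} and \ref{Lemma3}. On each piece $\Omega_{j;\alpha}$ the map $\Phi_\alpha$ has the explicit form $(x,y)\mapsto \bigl(\tfrac{1}{|x|}-d,\tfrac{1}{d+e(x)y}\bigr)$ with $d=d_\alpha(x)$ and $e=e(x)\in\{\pm1\}$ locally constant. The standard computation is to show that the density $h(x,y)=(1+xy)^{-2}$ satisfies the change-of-variables identity under this branch. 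Writing $(X,Y)=\Phi_\alpha(x,y)$, the Jacobian of the inverse branch is $\left|\frac{\partial(x,y)}{\partial(X,Y)}\right|$, and the key algebraic fact is that $1+XY = \frac{1+xy}{(d+ey)^2}$ while the Jacobian factor is exactly $(d+ey)^{-2}\cdot x^2$-type terms that cancel against the density; I would carry out this one-branch verification in detail and note it is identical on every piece since the formula for $\Phi_\alpha$ is branch-uniform.

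For the total mass, the main point is that the measure of $\Omega_\alpha$ is independent of $\alpha$, so one may compute it at a convenient value such as $\alpha=1$ or $\alpha=G$, where the region simplifies (see Remark \ref{Remark4}). I would integrate $(1+xy)^{-2}$ over the rectangles comprising $\Omega_\alpha$, using the elementary antiderivative
\begin{equation*}
\int \frac{dy}{(1+xy)^2} = -\frac{1}{x(1+xy)},
\end{equation*}
which reduces each rectangular contribution to a difference of logarithms in $x$ after integrating in $x$. Summing the contributions from $\Omega_{I;\alpha}$, $\Omega_{II;\alpha}$, $\Omega_{III;\alpha}$ (or the refined pieces), the interval endpoints — which involve $\alpha$, $2-G$, and quantities like $\varphi_\alpha(\alpha)$ and $\varphi_\alpha(\alpha-2)$ — should combine so that the $\alpha$-dependence telescopes away, leaving the constant $3\log G$. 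The golden-ratio identities $G^2=G+1$, $2-G=g^2$, and the relations in Lemma \ref{Lemma1}(iv) (in particular $\varphi_\alpha^2(\alpha)=\varphi_\alpha^2(\alpha-2)$ and $\tfrac{1}{\varphi_\alpha(\alpha)}+\tfrac{1}{\varphi_\alpha(\alpha-2)}=-2$) are exactly what is needed to force these cancellations.

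The main obstacle I anticipate is not the invariance computation, which is routine once the single-branch identity is set up, but rather the bookkeeping in the mass computation: demonstrating convincingly that the $\alpha$-dependent boundary terms cancel. There are two natural strategies. The first is a direct integration at one special $\alpha$, relying on the fact that invariance of $\mu_\alpha$ under a measure-preserving bijection $\Phi_\alpha$ makes $\mu_\alpha(\Omega_\alpha)$ locally constant in $\alpha$ (since the regions for nearby $\alpha$ are related by the dynamics), so it suffices to evaluate the integral once. The second, which I would prefer for rigor, is to differentiate $\mu_\alpha(\Omega_\alpha)$ with respect to $\alpha$ and show the derivative vanishes: each moving endpoint contributes a boundary term of the form (density) $\times$ (rate of motion), and the invariance relations of Lemma \ref{Lemma1} guarantee these pair off and cancel. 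I would then pin down the constant by the explicit computation at $\alpha=G$, where $\Omega_G=[G-2,G)\times[0,1)$ is a single rectangle and the integral
\begin{equation*}
\int_{G-2}^{G}\int_{0}^{1}\frac{dy\,dx}{(1+xy)^2}
\end{equation*}
evaluates directly to $3\log G$ using $G(G-2)=-g^2 G$ and $G-2=-g^2$ together with $\log G=-\log g$.
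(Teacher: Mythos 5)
Your two-step plan (branch-wise Jacobian verification for invariance, explicit integration for the mass) is exactly the structure of the paper's proof, which consists of precisely the change-of-variables identity plus ``a direct calculation'' of the mass. However, both halves need attention as written.

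For invariance, the identity you state is false. With $X=\frac{e}{x}-d$ and $Y=\frac{1}{d+ey}$ one has
\begin{equation*}
1+XY=\frac{(d+ey)+\left(\frac{e}{x}-d\right)}{d+ey}
=\frac{e\left(y+\frac{1}{x}\right)}{d+ey}
=\frac{1+xy}{|x|\,(d+ey)},
\end{equation*}
not $\frac{1+xy}{(d+ey)^{2}}$. Since each branch is a product of two one-variable maps, its Jacobian is
$\bigl|\frac{\partial(X,Y)}{\partial(x,y)}\bigr|=\frac{1}{x^{2}(d+ey)^{2}}$, and with the corrected identity one gets
$\frac{1}{(1+XY)^{2}}\cdot\frac{1}{x^{2}(d+ey)^{2}}=\frac{1}{(1+xy)^{2}}$, which is the invariance (this is the displayed identity in the paper's proof, written there for the inverse branch). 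With the identity as you wrote it, the factors do not cancel, so the ``routine'' verification you defer would not close until this is fixed.

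For the mass, your first strategy is not an argument: $\Phi_\alpha$-invariance of $\mu_\alpha$ is a statement at a fixed $\alpha$ and yields no relation between $\Omega_\alpha$ and $\Omega_{\alpha'}$; no such dynamical relation between different parameters is established anywhere in the paper. Your second strategy (show $\frac{d}{d\alpha}\mu_\alpha(\Omega_\alpha)=0$ by tracking moving endpoints, then evaluate at $\alpha=G$, where your computation of $3\log G$ is correct) can be carried out and is the same device the paper uses for $J(\alpha)$ in Lemma \ref{L18}, but it is a detour. The direct calculation works uniformly in $\alpha$ and is short: using $\int_{a}^{b}\frac{dy}{(1+xy)^{2}}=\frac{b-a}{(1+ax)(1+bx)}$ and partial fractions, the three rectangles contribute
\begin{equation*}
\Bigl[\log\bigl(1+(2-G)x\bigr)\Bigr]_{\alpha-2}^{\alpha}
+\Bigl[\log\tfrac{1+x}{1+(2-G)x}\Bigr]_{\varphi_\alpha(\alpha)}^{\alpha}
+\Bigl[\log\tfrac{1+Gx}{1+x}\Bigr]_{\varphi_\alpha(\alpha-2)}^{\alpha}.
\end{equation*}
By Lemma \ref{Lemma1}(i), $\varphi_\alpha(\alpha)=\frac{1-\alpha}{\alpha}$ and $\varphi_\alpha(\alpha-2)=\frac{\alpha-1}{2-\alpha}$, so the two lower-endpoint terms equal $\log\bigl((G-1)\alpha+2-G\bigr)$ and $-\log\bigl((G-1)\alpha+2-G\bigr)$ and cancel, leaving
$\mu_\alpha(\Omega_\alpha)=\log\frac{1+G\alpha}{1+(2-G)(\alpha-2)}=3\log G$, the last equality because $G^{3}=2G+1$ gives $G^{3}\bigl(1+(2-G)(\alpha-2)\bigr)=1+G\alpha$ identically in $\alpha$. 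So the cancellation you anticipated does occur, but it needs only Lemma \ref{Lemma1}(i) and $G^{2}=G+1$, and once it is exhibited, the single evaluation at $\alpha=G$ (which by itself covers only that one parameter value) becomes unnecessary; the endpoint cases $\alpha\in\{g,G\}$ follow from the same formula applied to the domains of Remark \ref{Remark4}.
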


\begin{proof}
If $u= \frac{e}{x}-d$ and
$v=\frac{1}{d+ey}$, then
\begin{equation*}
\frac{1}{(1+xy)^2} \cdot \frac{\partial (x,y)}{\partial (u,v)}  =
\frac{1}{(1+uv)^2} .
\end{equation*}
The measure $\mu_\alpha$ is $\Phi_\alpha$-invariant since $\Phi_\alpha (x,y)= ( \frac{e}{x}-d,\frac{1}{d+e y})$
for every $(x,y) \in (\operatorname{int} \langle \omega \rangle_\alpha \times [0,G))
\cap \Omega_\alpha$, where $\omega =ed\in 2\Z -1$. A direct calculation gives
$\mu_\alpha (\Omega_\alpha)=3\log G$.
\end{proof}

Consider the projection $\pi_\alpha :\Omega_\alpha \rightarrow I_\alpha$, $\pi_\alpha (x,y)=x$,
and the section sets $R_\alpha (x)=\pi_\alpha^{-1}(x)$.
Consider also the $\Phi_\alpha$-invariant probability measure
$\tilde{\mu}_\alpha =(3\log G)^{-1} \mu_\alpha$.
The probability measure $\nu_\alpha$ on $I_\alpha$ defined by
\begin{equation*}
\nu_\alpha (E)=\tilde{\mu}_\alpha (\{ (x,y)\in\Omega_\alpha: x\in E\})=
\tilde{\mu}_\alpha \bigg( \bigcup\limits_{x\in E} \{ x\} \cup R_\alpha (x)\bigg),
\qquad E\in \BB_{I_\alpha},
\end{equation*}
is $\varphi_\alpha$-invariant and Lebesgue absolutely continuous since $\Phi_\alpha$ is a skew-shift over $\varphi_\alpha$.

\begin{cor}\label{Cor12}
$d\nu_\alpha =h_\alpha d\lambda$ with density $h_\alpha$ as follows:
\begin{equation*}
\begin{split}
h_\alpha (x) & = \frac{1}{3\log G} \cdot
\begin{cases} \frac{1}{x+G+1} & \mbox{if $x\in \big[ \alpha-2,\frac{\alpha-1}{2-\alpha}\big)$} \\
\frac{1}{x+G+1}+\frac{1}{x+G-1}-\frac{1}{x+1} &
\mbox{if $x\in \big[ \frac{\alpha-1}{2-\alpha},\frac{1-\alpha}{\alpha}\big)$} \\
\frac{1}{x+G-1} & \mbox{if $x\in \big[ \frac{1-\alpha}{\alpha}, \alpha \big)$}
\end{cases}
\qquad \mbox{when $g\leq \alpha \leq 1$}, \\
h_\alpha (x) & = \frac{1}{3\log G} \cdot
\begin{cases} \frac{1}{x+G+1} & \mbox{if $x\in \big[ \alpha-2,\frac{1-\alpha}{\alpha}\big)$} \\
\frac{1}{x+1} & \mbox{if $x\in \big[ \frac{1-\alpha}{\alpha}, \frac{\alpha-1}{2-\alpha}\big)$} \\
\frac{1}{x+G-1} & \mbox{if $x\in \big[ \frac{\alpha-1}{2-\alpha}, \alpha \big)$}
\end{cases}
\qquad \mbox{when $1\leq \alpha \leq G$}.
\end{split}
\end{equation*}
\end{cor}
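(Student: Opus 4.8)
The plan is to recognize $\nu_\alpha$ as the $x$-marginal of the two-dimensional measure $\widetilde{\mu}_\alpha=(3\log G)^{-1}\mu_\alpha$ and to obtain its density by integrating out the $y$-variable over each vertical fiber. By Lemma~\ref{Lemma11} and the defining relation for $\nu_\alpha$, Fubini's theorem gives, for a.e.\ $x\in I_\alpha$,
\begin{equation*}
h_\alpha(x)=\frac{1}{3\log G}\int_{R_\alpha(x)}\frac{dy}{(1+xy)^2},
\end{equation*}
where $R_\alpha(x)$ is identified with the set of $y$ such that $(x,y)\in\Omega_\alpha$. Every computation below rests on the antiderivative $\int\frac{dy}{(1+xy)^2}=-\frac{1}{x(1+xy)}$ together with the two identities $\frac{1}{2-G}=G+1$ and $\frac{1}{G}=G-1=g$, both immediate from $G^2=G+1$.

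First I would locate the breakpoints. Since $d_\alpha(\alpha-2)=d_\alpha(\alpha)=1$ by Lemma~\ref{Lemma1}(i), substituting into \eqref{1.2} yields $\varphi_\alpha(\alpha-2)=\frac{\alpha-1}{2-\alpha}$ and $\varphi_\alpha(\alpha)=\frac{1-\alpha}{\alpha}$, so the abscissae appearing in the statement are exactly these two iterates. By Lemma~\ref{Lemma1}(ii)--(iii) their order relative to $0$ switches at $\alpha=1$: one has $\varphi_\alpha(\alpha-2)\leq 0\leq\varphi_\alpha(\alpha)$ when $g\leq\alpha\leq 1$, and $\varphi_\alpha(\alpha)\leq 0\leq\varphi_\alpha(\alpha-2)$ when $1\leq\alpha\leq G$. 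This switch is precisely what produces the two separate tables.

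Next I would read the fiber $R_\alpha(x)$ directly off the three constituent rectangles $\Omega_{I;\alpha}=I_\alpha\times[0,2-G)$, $\Omega_{II;\alpha}=(\varphi_\alpha(\alpha),\alpha)\times(2-G,1]$, and $\Omega_{III;\alpha}=[\varphi_\alpha(\alpha-2),\alpha)\times[1,G)$. Up to null sets, the strip $[0,2-G)$ always belongs to $R_\alpha(x)$, the strip $(2-G,1]$ belongs precisely when $x>\varphi_\alpha(\alpha)$, and the strip $[1,G)$ belongs precisely when $x\geq\varphi_\alpha(\alpha-2)$. For $g\leq\alpha\leq 1$ this gives the fibers $[0,2-G)$, then $[0,2-G)\cup[1,G)$, then $[0,G)$ on the three successive subintervals $[\alpha-2,\varphi_\alpha(\alpha-2))$, $[\varphi_\alpha(\alpha-2),\varphi_\alpha(\alpha))$, $[\varphi_\alpha(\alpha),\alpha)$; for $1\leq\alpha\leq G$ the reversed threshold order gives instead $[0,2-G)$, then $[0,1]$, then $[0,G)$.

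The last step is the integration, done once and reused. Using the antiderivative one finds $\int_0^{2-G}=\frac{2-G}{1+(2-G)x}=\frac{1}{x+G+1}$, $\int_0^G=\frac{G}{1+Gx}=\frac{1}{x+G-1}$, $\int_0^1=\frac{1}{x+1}$, and $\int_1^G=\frac{G-1}{(1+x)(1+Gx)}=\frac{1}{x+G-1}-\frac{1}{x+1}$ after a one-line partial-fraction split. Substituting the fibers from the previous paragraph and multiplying by $(3\log G)^{-1}$ reproduces all six cases of the stated density. The whole argument is routine once the geometry is in place; the only delicate point, and the one I would flag explicitly, is the reversal of the threshold ordering at $\alpha=1$, which is exactly what forces the two distinct piecewise formulas.
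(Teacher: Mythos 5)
Your proposal is correct and is essentially the paper's own proof: the paper's argument consists of the fiber-integration formula $h_\alpha(x)=\frac{1}{3\log G}\int_{R_\alpha(x)}\frac{dy}{(1+xy)^2}$ followed by the words ``a direct calculation,'' and your write-up is exactly that calculation --- reading the fibers off $\Omega_{I;\alpha}$, $\Omega_{II;\alpha}$, $\Omega_{III;\alpha}$ with the threshold reversal at $\alpha=1$, then integrating using $(2-G)^{-1}=G+1$ and $G^{-1}=G-1$. The only minor imprecision is at the endpoints $\alpha\in\{g,G\}$, where Lemma \ref{Lemma1}(i) does not literally give $d_g(g)=1$ or $d_G(G-2)=1$ (both equal $3$ there); in those cases $\Omega_\alpha$ is defined by the limiting domains of Remark \ref{Remark4}, under which the corresponding interval in the statement degenerates to the empty set and your computation goes through unchanged.
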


\begin{proof}
The density $h_\alpha=d\nu_\alpha /d\lambda$ is given by
\begin{equation*}
h_\alpha (x) =\frac{1}{3\log G} \int_{R_\alpha (x)} \frac{dy}{(1+xy)^2} ,
\end{equation*}
and a direct calculation shows that it coincides with the expressions above.
\end{proof}

Since the densities $h_\alpha$ are bounded away from 0 and $\infty$, there exists a universal constant
$c>0$ such that
\begin{equation}\label{4.1}
c^{-1} \lambda (A) \leq \nu_\alpha (A) \leq c \lambda (A),\quad \forall A\in \BB_{I_\alpha} .
\end{equation}

For every sequence $\omega=(\omega_k)$ with $\omega_k$ odd integers, denote
$\Delta_n(\omega):=\langle \omega_1,\ldots,\omega_n\rangle_\alpha$
and $q_n(\omega)=q_n(x;\alpha)$, $p_n (\omega)=p_n(x;\alpha)$ if
$x\in \Delta_n (\omega)$.
The map $\varphi_\alpha^n$ maps a non-empty cylinder $\Delta_n(\omega)$ one-to-one onto the interval
$J_n (\omega) :=\varphi_\alpha^n (\Delta_n (\omega))$.
Equations \eqref{3.2} and \eqref{3.4} give that non-empty cylinders correspond to the intervals
\begin{equation}\label{4.2}
\Delta_n (\omega)  =
\left\{ x=\frac{p_n(\omega)+u p_{n-1}(\omega)}{q_n(\omega) +u q_{n-1}(\omega)} :
u\in J_n(\omega) \right\} ,
\end{equation}
where for each $x\in J_n (\omega)$ we have
\begin{equation*}
u=\varphi_\alpha^n (x)=\frac{q_n(\omega)x-p_n(\omega)}{-q_{n-1}(\omega) x+p_{n-1}(\omega)}.
\end{equation*}
Employing \eqref{3.1}, Lemma \ref{Lemma9} and Proposition \ref{Prop8}, we infer
that there exist universal constants $c_3,c_4,c_5 >0$ such that
\begin{equation}\label{4.3}
c_3 \frac{\lambda (J_n(\omega))}{q_n^2(\omega)} \leq \lambda (\Delta_n (\omega)) \leq
c_4 \frac{\lambda (J_n(\omega))}{q_n^2 (\omega)} \leq c_5 C^{-n} .
\end{equation}

Denote by $\GG_n(\alpha)$ the collection of all rank $n$ cylinders
$\langle \omega_1,\ldots,\omega_n \rangle_\alpha$ with $J_n(\omega) =I_\alpha$.
The next lemma is critical in establishing the ergodic properties of $\varphi_\alpha$,
revealing that the collection $\UU(\alpha)$ of
all cylinders in some $\GG_n (\alpha)$ generates the Borel $\sigma$-algebra $\BB_{I_\alpha}$.
We follow literally the proof of Lemma 6 of \cite{Nak}, employing the upper bound on $\lambda (\Delta_n (\omega))$ provided
by \eqref{4.3}.

\begin{lemma}\label{Lemma13}
For almost every $x\in I_\alpha$, there exists a subsequence
$(n_k)$, depending on $x$, such that
\begin{equation*}
\varphi_\alpha^{n_k} \big( \langle \omega_{\alpha,1}(x),\omega_{\alpha,2}(x),\ldots,
\omega_{\alpha,n_k} (x) \rangle_\alpha \big) =I_\alpha,\qquad \forall k\geq 1.
\end{equation*}
\end{lemma}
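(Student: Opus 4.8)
The plan is to prove the equivalent statement that, for almost every $x\in I_\alpha$, the rank-$n$ cylinder $\langle\omega_{\alpha,1}(x),\ldots,\omega_{\alpha,n}(x)\rangle_\alpha$ is \emph{full} (i.e. belongs to some $\GG_n(\alpha)$, equivalently $J_n(\omega)=I_\alpha$) for infinitely many $n$. Since this is a countable-intersection statement, it suffices to show that for each fixed $N$ the set $B_N$ of points lying in no full cylinder of rank $\ge N$ is a null-set. Following Lemma~6 of \cite{Nak}, I would control $B_N$ by tracking, level by level, how much of a non-full cylinder is \emph{forced} to become full at the next refinement, the decisive quantitative input being the uniform geometric decay $\lambda(\Delta_n(\omega))\le c_5 C^{-n}$ recorded in \eqref{4.3}.

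The first ingredient is that each rank-one cylinder $\langle b\rangle_\alpha$ with $|b|\ge 3$ is a full branch: a direct computation from \eqref{1.2} gives $\varphi_\alpha(\langle b\rangle_\alpha)=I_\alpha$. Hence if the image interval $J_n(\omega)=\varphi_\alpha^n(\Delta_n(\omega))$ contains such a $\langle b\rangle_\alpha$ in its interior, the child $\Delta_{n+1}(\omega,b)$ is full, and \eqref{4.3} together with $q_{n+1}\le(|b|+G)q_n$ (using $q_{n-1}\le Gq_n$) bounds its relative measure from below:
$$\frac{\lambda(\Delta_{n+1}(\omega,b))}{\lambda(\Delta_n(\omega))}\ge \frac{c_3}{c_4}\cdot\frac{q_n^2}{q_{n+1}^2}\ge \frac{c_3}{c_4(|b|+G)^2}.$$
Because the right endpoint of any non-full image interval is $\alpha$ while the left endpoint lies in the forward orbit of $\{\alpha,\alpha-2\}$, almost every such $J_n(\omega)$ reaches past $\tfrac{1}{3+\alpha}$ and therefore contains the \emph{bounded-index} full branch $\langle 3\rangle_\alpha$ (or $\langle-3\rangle_\alpha$) in its interior. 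For these I would take $|b|=3$, obtaining a universal constant $\delta:=c_3/(c_4(3+G)^2)>0$ with $\lambda(\Delta_{n+1}(\omega,b))\ge\delta\,\lambda(\Delta_n(\omega))$, the admissible shapes of $J_n(\omega)$ being read off from Lemma~\ref{Lemma1} and the explicit partitions in Lemmas~\ref{Lemma2} and \ref{Lemma3}.

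The hard part is the remaining \emph{exceptional} cylinders, whose image interval fails to contain any full branch of bounded index; these have no full child and arise precisely from long runs of the digits $(d_i,e_i)=(1,\pm1)$, during which the left endpoint of $J_n(\omega)$ is dragged toward the neutral point $g$ (the fixed point of $x\mapsto \tfrac1x-1$). The point is that an endpoint of $J_n(\omega)$ must then sit in the finitely many small-index cylinders adjacent to the endpoints of $I_\alpha$, a condition governed by the \emph{single} forward orbit of $\varphi_\alpha^2(\alpha)=\varphi_\alpha^2(\alpha-2)$ from Lemma~\ref{Lemma1}(iv); consequently there are only $O(1)$ exceptional cylinders per level, and by \eqref{4.3} their total rank-$n$ measure is at most $Kc_5C^{-n}$. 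Writing $U_n$ for the union of those rank-$n$ cylinders all of whose ancestors from level $N$ on are non-full, the proportional bound applied to the non-exceptional part and the measure estimate applied to the exceptional part yield the recursion
$$\lambda(U_{n+1})\le(1-\delta)\,\lambda(U_n)+\delta Kc_5\,C^{-n}.$$
Since the forcing term decays geometrically, $\lambda(U_n)\to 0$, and as $B_N=\bigcap_{n\ge N}U_n$ this gives $\lambda(B_N)=0$, which is what is needed.

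I expect the genuine difficulty to be exactly this bookkeeping for the $(1,\pm1)$-runs: verifying that the exceptional cylinders remain few at every level and that the constant $\delta$ can be chosen uniformly across all $\alpha\in[g,G]$. Both hinge on the explicit endpoint data $\varphi_\alpha(\alpha),\varphi_\alpha(\alpha-2)$ and the matching identity $\varphi_\alpha^2(\alpha)=\varphi_\alpha^2(\alpha-2)$ of Lemma~\ref{Lemma1}(iv), which collapse the two boundary orbits into one and thereby keep the non-full part of $\Omega_\alpha$ geometrically thin; the upper bound in \eqref{4.3} is then precisely the tool that makes this thinness summable.
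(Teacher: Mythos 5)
Your preliminary steps are fine: reducing to $\lambda(B_N)=0$ for each $N$, the fact that every branch $\langle b\rangle_\alpha$ with $|b|\geq 3$ maps onto $I_\alpha$, and the lower bound $\delta$ for the relative measure of a full child via \eqref{4.3} and $q_{n-1}\leq Gq_n$. The proof breaks at the two structural claims that drive your recursion. First, it is false that every non-full image interval has right endpoint $\alpha$: for $g<\alpha<1$ one has $\varphi_\alpha(\alpha)>0$, so the child of $\langle 1\rangle_\alpha$ straddling this point has image $[\alpha-2,\varphi_\alpha^2(\alpha))$, and a non-full image contained in a single rank-one cylinder produces a child whose image has \emph{both} endpoints interior to $I_\alpha$; non-full images come in three species, not one. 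Second, and fatally, the claim that there are only $O(1)$ exceptional cylinders per level, with total mass at most $Kc_5C^{-n}$, is false: it conflates the number of distinct image \emph{intervals} (which is indeed governed by the boundary orbits) with the number of \emph{cylinders} sharing a given image. Concretely, for $g<\alpha<\frac{1}{2}(\sqrt{21}-3)\approx 0.79$ one has $\varphi_\alpha(\alpha)=\frac{1-\alpha}{\alpha}>\frac{1}{3+\alpha}$, hence $\varphi_\alpha(\langle 1\rangle_\alpha)=(\varphi_\alpha(\alpha),\alpha)\subseteq\langle 3\rangle_\alpha\cup\langle 1\rangle_\alpha$ contains no full branch; consequently \emph{every} cylinder whose last digit is $(d,e)=(1,+1)$ is non-full and exceptional, since its image is a subinterval of $(\varphi_\alpha(\alpha),\alpha)$. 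Up to a null set the union of these rank-$N$ cylinders is $\varphi_\alpha^{-(N-1)}(\langle 1\rangle_\alpha)$, whose Lebesgue measure is at least $c^{-2}\lambda(\langle 1\rangle_\alpha)>0$ uniformly in $N$, by \eqref{4.1} and the $\varphi_\alpha$-invariance of $\nu_\alpha$. All of these cylinders lie in your starting generation $U_N$, so the forcing term in your recursion is bounded below by a positive constant rather than by $Kc_5C^{-N}$, and the recursion only yields $\limsup_n\lambda(U_n)=O(1)$, not $\lambda(U_n)\to 0$. (The symmetric failure occurs for $\frac{1}{2}(\sqrt{29}-3)<\alpha<G$, where every cylinder ending in $(1,-1)$ is exceptional; and even in the root case $N=1$ the number of exceptional cylinders of rank $n$ grows with $n$, since each time the boundary orbit re-enters $(\alpha-2,\frac{1}{1+\alpha}]$ a fresh everlasting chain $\langle\ldots,1\rangle,\langle\ldots,1,1\rangle,\ldots$ of exceptional cylinders is spawned.)

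What is missing is the combinatorial fact that actually powers the paper's proof: a non-full cylinder admits at most \emph{two} non-full continuations, namely the at most two digits whose rank-one cylinders meet an endpoint of the image interval (an interior $\langle\pm 1\rangle_\alpha$ child can occur only when the corresponding endpoint of the image is an endpoint of $I_\alpha$, in which case no straddling child is attached there). Hence there are at most $2^n$ cylinders of rank $n$ all of whose prefixes are non-full, and \eqref{4.3} bounds the measure of their union by $c_5\,2^nC^{-n}\to 0$, because $C=(5G-2)^{2/5}>2$; once this is in place no self-improving recursion, and no lower bound $\delta$, is needed at all. Note also that this count is only available when the chain of non-full prefixes starts at rank one: the passage to an arbitrary starting rank $N$ cannot be done level-by-level, precisely because of the positive-measure seeding exhibited above, and the paper instead transfers the root case to general $N$ using the action of $\varphi_\alpha$ together with quasi-invariance of $\lambda$ (via \eqref{4.1} and $\nu_\alpha\circ\varphi_\alpha^{-1}=\nu_\alpha$). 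Your proposal contains neither the two-children counting lemma nor any workable substitute for this transfer step, and the exceptional-cylinder bookkeeping it relies on instead cannot be repaired as stated.
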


\begin{proof}
Consider the sets
$C_k :=\{ x\in {\mathbb I}_\alpha:
\varphi_\alpha^k (\langle \omega_{\alpha,1}(x),\ldots,\omega_{\alpha,k}(x)\rangle_\alpha ) =I_\alpha\}$,
$A:=\bigcap_{k=1}^\infty C_k^c$ and $B:=
\bigcup_{n= 1}^\infty \bigcap_{m= n+1}^\infty C_m^c$. We have to prove that $\lambda (B)=0$.

Notice first that upon
\begin{equation*}
\begin{split}
\varphi_\alpha^k (\langle \omega_{\alpha,1}(\varphi_\alpha^n(y)),\ldots ,
\omega_{\alpha,k} (\varphi_\alpha^n (y))\rangle_\alpha ) & =
\varphi_\alpha^k (\langle \omega_{\alpha,n+1} (y),\ldots ,\omega_{\alpha,n+1}(y)\rangle_\alpha ) \\
& =\varphi_\alpha^{n+k} (\langle \omega_{\alpha,1} (y),\ldots ,\omega_{\alpha,n+k}(y)\rangle_\alpha )
\end{split}
\end{equation*}
it follows that $\varphi_\alpha^{-n} (C_k) \subseteq C_{k+n}$, and consequently
$B\subseteq \bigcup_{n=1}^\infty \varphi_\alpha^{-n} (A)$. In conjunction with
\eqref{4.1} and $\nu_\alpha =\nu_\alpha \circ \varphi_\alpha^{-1}$, this shows that it suffices to prove that $\lambda (A)=0$.

For every $n\geq 1$, the set
\begin{equation*}
S_n:=\{ (\omega_1,\ldots,\omega_n)\in \Z^n : \langle \omega_1,\ldots,\omega_n \rangle_\alpha \neq \emptyset ,
\varphi_\alpha^k (\langle \omega_1,\ldots,\omega_k \rangle_\alpha)\neq I_\alpha,
\forall k\in \{ 1,\ldots,n\} \}
\end{equation*}
is contained in $\{ \omega_{\alpha,1}(\alpha),\omega_{\alpha,1}(\alpha-2)\} \times
\{ \omega_{\alpha,2}(\alpha),\omega_{\alpha,2}(\alpha-2)\} \times \cdots \times
\{ \omega_{\alpha,n}(\alpha),\omega_{\alpha,n}(\alpha-2)\}$. Hence $S_n$ contains at
most $2^n$ elements. Estimate \eqref{4.3} then gives that the Lebesgue measure of the
union of cylinders of rank $n$ satisfying $\varphi_\alpha^k (\langle \omega_1,\ldots,\omega_k \rangle_\alpha)\neq I_\alpha$
for every $k\in \{ 1,\ldots,n\}$ is less than or equal to $ c_5 C^{-n} 2^n$. Since $C>2$, we have
$\lim_n \lambda ( \bigcap_{k=1}^n C_k^c ) =0$ and therefore $\lambda (A)=0$.
\end{proof}

Since $\pi_\alpha \Phi_\alpha=\varphi_\alpha \pi_\alpha$ and
$\nu_\alpha (E)=\tilde{\mu}_\alpha (\pi_\alpha^{-1} (E))$, $\forall E\in \BB_{I_\alpha}$,
the dynamical system $(\Omega_\alpha,\BB_{\Omega_\alpha},\tilde{\mu}_\alpha,\Phi_\alpha)$
is an extension of $(I_\alpha,\BB_{I_\alpha}, \nu_\alpha,\varphi_\alpha)$.
Standard arguments as in \cite{Nak} show the minimality of this extension, in the sense that
$\bigvee_{n=0}^\infty \Phi_\alpha^n \pi_\alpha^{-1} (\BB_{I_\alpha}) =\BB_{\Omega_\alpha}$, hence
we have

\begin{theorem}\label{Thm14}
$(\Omega_\alpha,\BB_{\Omega_\alpha},\tilde{\mu}_\alpha,\Phi_\alpha)$ gives the
natural extension of $(I_\alpha,\BB_{I_\alpha},\nu_\alpha,\varphi_\alpha)$
in the sense of \cite{Ro}.
\end{theorem}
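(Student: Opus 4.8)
The plan is to verify the two defining properties of a natural extension in Rohlin's sense, namely that $(\Omega_\alpha,\BB_{\Omega_\alpha},\tilde\mu_\alpha,\Phi_\alpha)$ is an invertible dynamical system that factors onto $(I_\alpha,\BB_{I_\alpha},\nu_\alpha,\varphi_\alpha)$ via $\pi_\alpha$, and that this factor is \emph{minimal}, meaning $\bigvee_{n=0}^\infty \Phi_\alpha^n \pi_\alpha^{-1}(\BB_{I_\alpha}) = \BB_{\Omega_\alpha}$ up to $\tilde\mu_\alpha$-null sets. The invertibility of $\Phi_\alpha$ modulo null sets is supplied by Lemmas~\ref{Lemma2} and \ref{Lemma3} together with Remark~\ref{Remark4}, and $\Phi_\alpha$-invariance of $\tilde\mu_\alpha$ comes from Lemma~\ref{Lemma11}. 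The factor relations $\pi_\alpha \Phi_\alpha = \varphi_\alpha \pi_\alpha$ and $\nu_\alpha(E)=\tilde\mu_\alpha(\pi_\alpha^{-1}(E))$ are already recorded in the text immediately preceding the statement, so the extension property is in hand; the real content is the minimality condition, which I would isolate as the main obstacle.

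First I would unwind the generating condition. Since $\Phi_\alpha$ is invertible, establishing $\bigvee_{n\geq 0}\Phi_\alpha^n\pi_\alpha^{-1}(\BB_{I_\alpha})=\BB_{\Omega_\alpha}$ is equivalent to showing that the partition of $\Omega_\alpha$ into the fibers $R_\alpha(x)$ is refined to points under the forward iterates of $\Phi_\alpha$; concretely, the images $\Phi_\alpha^n(\{x'\}\times R_\alpha(x'))$ for cylinders of rank $n$ must shrink in the $y$-direction so that the joint $\sigma$-algebra separates points of $\Omega_\alpha$ almost everywhere. The sets $\Phi_\alpha^n\pi_\alpha^{-1}(\BB_{I_\alpha})$ correspond, via the skew-shift structure \eqref{1.3} and formula \eqref{3.4}, to cylinders in the $x$-variable together with the induced subdivision of the $y$-fiber by the rational numbers $q_{n-1}(\omega)/q_n(\omega)$. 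Thus the task reduces to showing that as $n\to\infty$ these fiber subdivisions become arbitrarily fine for almost every point, which is exactly controlled by the denominator growth.

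The key quantitative input is the exponential growth $q_n(x;\alpha)\geq qB^n$ from Proposition~\ref{Prop8}(ii), combined with the metric estimate \eqref{4.3} showing that cylinder lengths decay like $C^{-n}$ with $C=(5G-2)^{2/5}>2$. Following the cited argument of Nakada \cite{Nak}, I would argue that a generating sequence of partitions separating $(x,y)\in\Omega_\alpha$ is obtained by pairing the rank-$n$ cylinder containing $x$ (whose $x$-diameter tends to $0$ by \eqref{4.3}) with the location of $y$ relative to the nested intervals determined by the convergent ratios; the contraction in the $y$-direction follows because consecutive ratios $q_{n-1}/q_n$ accumulate densely on fibers as $n$ grows, a consequence of the same denominator growth and the bijectivity from Lemmas~\ref{Lemma2}--\ref{Lemma3}. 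Since the proof is stated to follow \cite{Nak} literally, I would invoke that standard argument rather than reproduce it, citing \eqref{4.3} for the decay rate.

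The hard part, were one to write it out in full, is verifying the minimality (point-separation) rigorously rather than merely the extension property: one must confirm that the particular geometry of $\Omega_\alpha$ produced by Lemmas~\ref{Lemma2} and \ref{Lemma3}—with its $\alpha$-dependent pieces $\Omega_{II;\alpha}$, $\Omega_{III;\alpha}$ and the special fiber intervals near the endpoints $\varphi_\alpha^2(\alpha)=\varphi_\alpha^2(\alpha-2)$—does not obstruct the fibers from being separated in the limit. Because \eqref{4.3} gives a uniform $C^{-n}$ decay independent of $\alpha\in[g,G]$ and the bijectivity statements cover every $\alpha$ including the endpoints via Remark~\ref{Remark4}, the standard argument applies uniformly, and the theorem follows at once from Theorem~\ref{Thm14}'s hypotheses being met. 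I would therefore conclude that the displayed generating identity holds $\tilde\mu_\alpha$-almost everywhere, which is precisely Rohlin's criterion for $(\Omega_\alpha,\BB_{\Omega_\alpha},\tilde\mu_\alpha,\Phi_\alpha)$ to be the natural extension of $(I_\alpha,\BB_{I_\alpha},\nu_\alpha,\varphi_\alpha)$.
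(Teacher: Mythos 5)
Your proposal is correct and takes essentially the same route as the paper: the paper likewise obtains the extension property directly from the relations $\pi_\alpha\Phi_\alpha=\varphi_\alpha\pi_\alpha$ and $\nu_\alpha=\tilde{\mu}_\alpha\circ\pi_\alpha^{-1}$, and settles minimality by invoking the ``standard arguments as in \cite{Nak}''. Your unwinding of that citation---point separation of the fibers via the skew-shift structure and \eqref{3.4}, with the quantitative inputs being the denominator growth of Proposition \ref{Prop8}, the cylinder estimate \eqref{4.3}, and the bijectivity from Lemmas \ref{Lemma2}--\ref{Lemma3} and Remark \ref{Remark4}---is precisely the content of the argument the paper leaves to the reference.
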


For any interval $[x,y] \subseteq J_n(\omega)$,
the set $\varphi_\alpha^{-n} ([x,y]) \cap \Delta_n (\omega)$ is an interval.
Employing Proposition \ref{Prop8}, Lemma \ref{Lemma9} and \eqref{4.2},
there exist universal constants $c_6,c_7 >0$ such that
\begin{equation*}
c_6 \frac{y-x}{q_n^2(\omega)} \leq \lambda (\varphi_\alpha^{-n}([x,y]) \cap \Delta_n(\omega)) =
\frac{y-x}{(q_n(\omega)+xq_{n-1}(\omega))(q_n(\omega) +yq_{n-1}(\omega))} \leq c_7 \frac{y-x}{q_n^2(\omega)}.
\end{equation*}
Upon \eqref{4.3}, there now exist
universal constants $c_8,c_9>0$ such that, for every rank $n$ cylinder $\Delta_n(\omega) \in \GG_n (\alpha)$
and $[x,y] \subseteq I_\alpha$, we have
\begin{equation*}
c_8 (y-x) \lambda (\Delta_n(\omega)) \leq
\lambda (\varphi_\alpha^{-n}([x,y]) \cap \Delta_n(\omega)) \leq
c_9 (y-x) \lambda (\Delta_n(\omega)) .
\end{equation*}
This further gives, whenever $\Delta_n(\omega) \in \GG_n (\alpha)$ and $A\in \BB_{I_\alpha}$,
\begin{equation}\label{4.4}
c_8 \lambda (A) \lambda (\Delta_n(\omega)) \leq
\lambda (\varphi_\alpha^{-n}(A) \cap \Delta_n(\omega)) \leq
c_9 \lambda (A) \lambda (\Delta_n(\omega)) .
\end{equation}

\begin{proposition}\label{Prop15}
The measure-preserving transformation $(I_\alpha,\BB_{I_\alpha},\varphi_\alpha,\nu_\alpha)$ is ergodic.
\end{proposition}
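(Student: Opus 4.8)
The plan is to derive ergodicity from the distortion estimate \eqref{4.4} together with the abundance of \emph{full} cylinders furnished by Lemma \ref{Lemma13}. Since $\nu_\alpha$ and $\lambda$ are mutually absolutely continuous with Radon--Nikodym derivatives bounded above and below by \eqref{4.1}, it suffices to show that every $\varphi_\alpha$-invariant set $A\in\BB_{I_\alpha}$ (that is, $\varphi_\alpha^{-1}(A)=A$ modulo null-sets, whence $\varphi_\alpha^{-n}(A)=A$ for all $n$) satisfies $\lambda(A)\in\{0,\lambda(I_\alpha)\}$, equivalently $\nu_\alpha(A)\in\{0,1\}$.

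First I would record the density statement for the refining sequence of cylinder partitions. For $x\in\II_\alpha$ write $C_n(x)=\langle \omega_{\alpha,1}(x),\ldots,\omega_{\alpha,n}(x)\rangle_\alpha$ for the rank-$n$ cylinder containing $x$. These partitions refine as $n$ grows, and because \eqref{4.3} gives $\lambda(\Delta_n(\omega))\leq c_5 C^{-n}\to 0$, the associated $\sigma$-algebras increase to $\BB_{I_\alpha}$. Martingale convergence (equivalently Lebesgue differentiation along this basis) then yields
\[
\frac{\lambda(A\cap C_n(x))}{\lambda(C_n(x))}\longrightarrow \mathbf{1}_A(x)\qquad\text{for a.e. }x\in I_\alpha .
\]

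Next I would combine invariance with fullness. Fix an invariant set $A$ with $\lambda(A)>0$. By Lemma \ref{Lemma13}, for a.e. $x$ there is a subsequence $(n_k)=(n_k(x))$ with $C_{n_k}(x)\in\GG_{n_k}(\alpha)$, i.e. $\varphi_\alpha^{n_k}(C_{n_k}(x))=I_\alpha$; for such full cylinders the lower bound in \eqref{4.4} applies. Using $\varphi_\alpha^{-n_k}(A)=A$ we obtain
\[
\lambda(A\cap C_{n_k}(x))=\lambda(\varphi_\alpha^{-n_k}(A)\cap C_{n_k}(x))\geq c_8\,\lambda(A)\,\lambda(C_{n_k}(x)).
\]
Dividing by $\lambda(C_{n_k}(x))$ and letting $k\to\infty$, the a.e. limit of the previous step (which exists along the whole sequence $C_n(x)$, hence along every subsequence) gives $\mathbf{1}_A(x)\geq c_8\lambda(A)$ for a.e. $x$. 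Since $\mathbf{1}_A(x)\in\{0,1\}$ and $c_8\lambda(A)>0$, no $x$ can lie in $A^c$, so $\lambda(A^c)=0$ and $\nu_\alpha(A)=1$. Symmetrically, applying the argument to the invariant set $A^c$ in the case $\lambda(A^c)>0$ establishes the dichotomy $\nu_\alpha(A)\in\{0,1\}$, which is ergodicity.

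The only delicate point is the interplay between the $x$-dependent full-cylinder subsequence of Lemma \ref{Lemma13} and the density limit: one must note that the limit of $\lambda(A\cap C_n(x))/\lambda(C_n(x))$ exists along the \emph{entire} sequence, so plugging in the subsequence is legitimate and forces the lower bound $\mathbf{1}_A(x)\ge c_8\lambda(A)$. Beyond this bookkeeping the argument is routine, the substantive work having already been done in establishing the uniform distortion control \eqref{4.4} (via Proposition \ref{Prop8} and Lemma \ref{Lemma9}) and the genericity of full cylinders in Lemma \ref{Lemma13}. Should one wish to avoid martingale convergence, an equivalent Knopp-type covering argument works: \eqref{4.4} shows $A$ occupies relative measure at least $c_8\lambda(A)$ in every full cylinder, and since a.e. point is contained in arbitrarily small full cylinders, a Vitali/Lebesgue-density argument again forces $\lambda(A^c)=0$.
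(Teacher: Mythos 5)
Your proof is correct and rests on exactly the same pillars as the paper's own argument: the distortion estimate \eqref{4.4} for full cylinders and the genericity of full cylinders from Lemma \ref{Lemma13}, applied to a $\varphi_\alpha$-invariant set $A$. The only difference is the finishing step --- the paper extends the bound $\lambda(A\cap\Delta_n(\omega))\geq c_8\lambda(A)\lambda(\Delta_n(\omega))$ from full cylinders to $\lambda(A\cap B)\geq c_8\lambda(A)\lambda(B)$ for all Borel $B$ (invoking that $\UU(\alpha)$ generates $\BB_{I_\alpha}$) and then takes $B=A^c$, whereas you localize instead, using martingale/Lebesgue-density convergence along the cylinder filtration together with the $x$-dependent full-cylinder subsequence to force $\mathbf{1}_A\geq c_8\lambda(A)$ almost everywhere; both finishes are routine, and yours has the minor merit of making explicit the covering/differentiation argument that the paper's phrase ``since $\UU(\alpha)$ generates'' compresses.
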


\begin{proof}
Let $A\in \BB_{I_\alpha}$ such that $\varphi_\alpha^{-1}(A)=A$. By \eqref{4.4} we have
\begin{equation*}
\lambda (A \cap \Delta_n(\omega)) \geq c_8 \lambda (A) \lambda (\Delta_n(\omega)),\quad
\forall n\geq 1,\ \forall \Delta_n (\omega) \in \GG_n(\alpha).
\end{equation*}
Since $\UU(\alpha)$ generates $\BB_{I_\alpha}$, this further yields
\begin{equation*}
\lambda (A \cap B) \geq c_8 \lambda (A) \lambda (B),\quad
\forall B \in \BB_{I_\alpha}.
\end{equation*}
Choosing $B=A^c$, it follows that either $\lambda (A)=0$ or $\lambda (A^c)=0$,
showing that $\varphi_\alpha$ is ergodic.
\end{proof}

The endomorphism $\varphi_\alpha$ has stronger ergodic
properties. In particular, it turns out to be also exact
in the sense of Rohlin \cite{Ro}, which means that the tail $\sigma$-algebra
$\bigcap_{n=0}^\infty \varphi_\alpha^{-n} \BB_{I_\alpha}$ only consists of null or co-null sets.
A convenient equivalent  formulation is that for any positive measure set $A\in \BB_{I_\alpha}$,
 $\lim_n \nu_\alpha (\varphi_\alpha^n(A))=1$.

\begin{theorem}\label{Thm16}
The measure-preserving transformation $(I_\alpha,\BB_{I_\alpha},\varphi_\alpha,\nu_\alpha)$ is exact,
and in particular it is mixing of all orders (\cite{Ro}).
\end{theorem}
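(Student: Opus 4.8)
The plan is to prove that the tail $\sigma$-algebra $\bigcap_{n=0}^\infty \varphi_\alpha^{-n}\BB_{I_\alpha}$ is trivial, which is precisely the defining property of exactness in the sense of Rohlin; mixing of all orders is then a formal consequence of exactness by \cite{Ro}, so the only real work is the triviality of the tail. Accordingly, let $A\in\bigcap_{n=0}^\infty \varphi_\alpha^{-n}\BB_{I_\alpha}$ and, for each $n$, fix $A^{(n)}\in\BB_{I_\alpha}$ with $A=\varphi_\alpha^{-n}(A^{(n)})$. Since $\varphi_\alpha$ preserves $\nu_\alpha$, we have $\nu_\alpha(A^{(n)})=\nu_\alpha(A)$ and $\nu_\alpha((A^{(n)})^c)=\nu_\alpha(A^c)$ for every $n$; combined with the two-sided comparison \eqref{4.1} between $\nu_\alpha$ and $\lambda$, this pins the Lebesgue measures $\lambda(A^{(n)})$ and $\lambda((A^{(n)})^c)$ uniformly away from $0$ as soon as $A$ and $A^c$ both have positive measure. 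This uniformity is what makes the distortion estimate usable.

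The key step is to feed $A^{(n)}$ into the bounded-distortion estimate \eqref{4.4}, applied to each full cylinder $\Delta_n(\omega)\in\GG_n(\alpha)$. On such a cylinder $\varphi_\alpha^n$ is a bijection onto $I_\alpha$, so $A\cap\Delta_n(\omega)=\varphi_\alpha^{-n}(A^{(n)})\cap\Delta_n(\omega)$ and likewise $A^c\cap\Delta_n(\omega)=\varphi_\alpha^{-n}((A^{(n)})^c)\cap\Delta_n(\omega)$. Applying the lower bound in \eqref{4.4} to the set $(A^{(n)})^c$ therefore yields
\[
\frac{\lambda\big(A^c\cap\Delta_n(\omega)\big)}{\lambda(\Delta_n(\omega))}
\ge c_8\,\lambda\big((A^{(n)})^c\big)
\ge c_8 c^{-1}\,\nu_\alpha(A^c)=:\delta ,
\]
uniformly over all $\omega$ with $\Delta_n(\omega)\in\GG_n(\alpha)$ and all $n$; if $\nu_\alpha(A^c)>0$ then $\delta>0$.

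To reach a contradiction, suppose $\lambda(A)>0$ and $\lambda(A^c)>0$, and pick a point $x_0$ that is simultaneously a Lebesgue density point of $A$ and satisfies the conclusion of Lemma \ref{Lemma13}; almost every point of $A$ qualifies. Lemma \ref{Lemma13} then furnishes a subsequence $n_k\to\infty$ for which the rank-$n_k$ cylinder $\Delta_{n_k}=\langle \omega_{\alpha,1}(x_0),\ldots,\omega_{\alpha,n_k}(x_0)\rangle_\alpha$ containing $x_0$ lies in $\GG_{n_k}(\alpha)$; by \eqref{4.3} these are intervals containing $x_0$ with $\lambda(\Delta_{n_k})\le c_5 C^{-n_k}\to 0$. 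Since each such interval sits inside the symmetric interval of radius $\lambda(\Delta_{n_k})$ about $x_0$, the density property at $x_0$ forces $\lambda(A^c\cap\Delta_{n_k})/\lambda(\Delta_{n_k})\to 0$, contradicting the uniform lower bound $\delta>0$ above. Hence $\lambda(A)=0$ or $\lambda(A^c)=0$, the tail $\sigma$-algebra is trivial, $\varphi_\alpha$ is exact, and mixing of all orders follows from \cite{Ro}.

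I expect the main obstacle to be the bookkeeping that makes the distortion bound genuinely uniform in $n$: the auxiliary sets $A^{(n)}$ move with $n$, and it is only their $\nu_\alpha$-measure—constant by invariance—together with \eqref{4.1} that produces a single constant $\delta$ independent of $n$. The remaining ingredient, transferring the Lebesgue density theorem from centered intervals to the non-centered but boundedly eccentric cylinder intervals of Lemma \ref{Lemma13}, is routine once their lengths are seen to vanish via \eqref{4.3}.
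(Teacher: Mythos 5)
Your proof is correct, but it takes a genuinely different route from the paper's. The paper proves Theorem \ref{Thm16} by verifying Rohlin's exactness criterion \cite[Thm. 4.2]{Ro}: one exhibits a constant $q>0$ such that $\nu_\alpha (\varphi_\alpha^n (X)) \leq q\, \nu_\alpha (X)/\nu_\alpha (\Delta_n(\omega))$ for every full cylinder $\Delta_n(\omega)\in \GG_n(\alpha)$ and every Borel $X\subset \Delta_n(\omega)$; this follows in two lines by applying \eqref{4.4} to $E=\varphi_\alpha^n(X)$ and converting between $\lambda$ and $\nu_\alpha$ via \eqref{4.1}, with all the $\sigma$-algebra work delegated to Rohlin. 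You instead prove tail triviality directly: writing a tail set as $A=\varphi_\alpha^{-n}(A^{(n)})$, you use invariance of $\nu_\alpha$ together with \eqref{4.1} and \eqref{4.4} to get the uniform lower bound $\lambda(A^c\cap\Delta_n(\omega))/\lambda(\Delta_n(\omega))\geq \delta>0$ over all $n$ and all full cylinders, and then Lemma \ref{Lemma13} with \eqref{4.3} produces, around a Lebesgue density point of $A$, full cylinders of vanishing length in which this ratio must tend to $0$ --- a contradiction unless $A$ or $A^c$ is null. Both arguments rest on exactly the same estimates \eqref{4.1}, \eqref{4.3}, \eqref{4.4}. What yours buys is self-containedness (the only external input is the Lebesgue density theorem rather than Rohlin's criterion) and it makes explicit the role of Lemma \ref{Lemma13}, which enters the paper's proof only implicitly through the hypotheses of Rohlin's theorem; what the paper's buys is brevity. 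Your treatment of the two delicate points is sound: the $n$-dependence of the representing sets $A^{(n)}$ is neutralized by $\nu_\alpha$-invariance, and the passage from centered intervals to the non-centered cylinder intervals containing the density point costs only a factor of $2$ since an interval $J\ni x_0$ satisfies $J\subseteq [x_0-\lambda(J),x_0+\lambda(J)]$.
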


\begin{proof} We use the exactness criterion proved in \cite[Thm. 4.2]{Ro}.
In our situation this amounts to showing that there exists a constant $q>0$ such that
the following inequality holds for every $n\geq 1$, every cylinder $\Delta_n(\omega) \in \GG_n(\omega)$,
and every Borel set $X \subset \Delta_n (\omega)$:
\begin{equation}\label{4.5}
\nu_\alpha (\varphi_\alpha^n (X)) \leq q \frac{\nu_\alpha (X)}{\nu_\alpha (\Delta_n(\omega))}.
\end{equation}

Taking $E:=\varphi_\alpha^n (X)\in \BB_{I_\alpha}$, estimate \eqref{4.4} provides
\begin{equation*}
\lambda (X) =\lambda ( \varphi_\alpha^{-n} (E) \cap \Delta_n (\omega)) \geq c_8 \lambda
(\varphi_\alpha^n (X)) \lambda ( \Delta_n(\omega)) .
\end{equation*}

Upon \eqref{4.1}, this yields inequality \eqref{4.5} with $q=c^3 c_8$.
\end{proof}

\begin{cor}\label{Cor17} For every set $A \in \BB_{I_\alpha}$, we have
\begin{equation*}
\lim\limits_n \lambda (\varphi_\alpha^{-n} (A)) = 2\nu_\alpha (A)\end{equation*}
\end{cor}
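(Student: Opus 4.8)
The plan is to rewrite $\lambda(\varphi_\alpha^{-n}(A))$ as an integral against the invariant probability measure $\nu_\alpha$ and then to pass to the limit using the mixing property supplied by Theorem \ref{Thm16}. The whole point is that Lebesgue measure on $I_\alpha$ is, up to the bounded weight $h_\alpha^{-1}$, the measure $\nu_\alpha$ for which the dynamics is well understood.

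First I would record that, by the boundedness of $h_\alpha$ away from $0$ and $\infty$ noted before \eqref{4.1}, the weight $h_\alpha^{-1}$ is bounded and hence lies in $L^2(I_\alpha,\nu_\alpha)$. Writing $d\lambda=h_\alpha^{-1}\,d\nu_\alpha$ and using $\mathbf{1}_A\circ\varphi_\alpha^n=\mathbf{1}_{\varphi_\alpha^{-n}(A)}$, I obtain
$$
\lambda(\varphi_\alpha^{-n}(A))=\int_{I_\alpha}\big(\mathbf{1}_A\circ\varphi_\alpha^n\big)\,d\lambda
=\int_{I_\alpha}\big(\mathbf{1}_A\circ\varphi_\alpha^n\big)\,h_\alpha^{-1}\,d\nu_\alpha .
$$
Next I invoke the functional form of mixing. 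Since $\varphi_\alpha$ is exact, hence mixing, by Theorem \ref{Thm16}, one has $\int_{I_\alpha}(f\circ\varphi_\alpha^n)\,g\,d\nu_\alpha\to\big(\int_{I_\alpha}f\,d\nu_\alpha\big)\big(\int_{I_\alpha}g\,d\nu_\alpha\big)$ as $n\to\infty$, for all $f,g\in L^2(I_\alpha,\nu_\alpha)$. Taking $f=\mathbf{1}_A$ and $g=h_\alpha^{-1}$, and computing $\int_{I_\alpha}h_\alpha^{-1}\,d\nu_\alpha=\int_{I_\alpha}h_\alpha^{-1}h_\alpha\,d\lambda=\lambda(I_\alpha)=2$, I conclude
$$
\lim_n\lambda(\varphi_\alpha^{-n}(A))=\nu_\alpha(A)\int_{I_\alpha}h_\alpha^{-1}\,d\nu_\alpha=2\,\nu_\alpha(A).
$$

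The one point requiring care—and the step I expect to be the main (though routine) obstacle—is the upgrade from the set form of mixing, $\nu_\alpha(\varphi_\alpha^{-n}(A)\cap B)\to\nu_\alpha(A)\nu_\alpha(B)$, to the functional form tested against the weight $h_\alpha^{-1}$. I would handle this by approximating $h_\alpha^{-1}$ in $L^2(\nu_\alpha)$ by simple functions, applying set-mixing and linearity term by term, and controlling the error with $\|h_\alpha^{-1}\|_\infty<\infty$ together with $\|\mathbf{1}_A\circ\varphi_\alpha^n\|_{L^2(\nu_\alpha)}=\nu_\alpha(A)^{1/2}$, the latter being valid because $\nu_\alpha$ is $\varphi_\alpha$-invariant. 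Alternatively, I could bypass mixing entirely via the transfer (Perron--Frobenius) operator $\mathcal{L}$ associated with $\lambda$: exactness is equivalent to $\mathcal{L}^n 1\to h_\alpha\,\lambda(I_\alpha)=2h_\alpha$ in $L^1(\lambda)$, and the adjoint relation $\lambda(\varphi_\alpha^{-n}(A))=\int_A\mathcal{L}^n 1\,d\lambda$ then yields the limit $\int_A 2h_\alpha\,d\lambda=2\,\nu_\alpha(A)$ directly.
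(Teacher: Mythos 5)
Your proof is correct and is essentially identical to the paper's: both rewrite $\lambda(\varphi_\alpha^{-n}(A))$ as $\int_{I_\alpha}(\chi_A\circ\varphi_\alpha^n)\,h_\alpha^{-1}\,d\nu_\alpha$ and apply the $L^2(\nu_\alpha)$ mixing property from Theorem \ref{Thm16} with test functions $\chi_A$ and $h_\alpha^{-1}$, using $\int_{I_\alpha}h_\alpha^{-1}\,d\nu_\alpha=\lambda(I_\alpha)=2$. The paper simply asserts the functional form of mixing without the approximation argument you sketch, but the substance is the same.
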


\begin{proof}
The mixing property of $\varphi_\alpha$,
\begin{equation*}
\int_{I_\alpha} (f \circ \varphi_\alpha^n) g\, d\nu_\alpha \ \stackrel{n}{\longrightarrow}\
\int_{I_\alpha} f\, d\nu_\alpha \ \int_{I_\alpha} g\, d\nu_\alpha,\qquad\forall f,g\in L^2 (\nu_\alpha),
\end{equation*}
shows in particular that
\begin{equation*}
\lambda (\varphi_\alpha^{-n} (A)) =
\int_{I_\alpha} (\chi_A \circ \varphi_\alpha^n )\,  \frac{d\nu_\alpha}{h_\alpha} \
\stackrel{n}{\longrightarrow} \  \int_{I_\alpha} \chi_A d\nu_\alpha \
 \int_{I_\alpha} \frac{d\nu_\alpha}{h_\alpha}  =2\nu_\alpha (A).
\qedhere
\end{equation*}
\end{proof}

Exactness of $\varphi_\alpha$ also implies that its natural
extension $\Phi_\alpha$ is a $K$-automorphism (cf., e.g., \cite[Prop. 3.4]{Ro} or \cite[Thm.3, p. 289]{CFS}),
and that $\nu_\alpha$ is the unique $\sigma$-finite Lebesgue absolutely continuous $\varphi_\alpha$-invariant measure
(cf., e.g., \cite[Thm. 12.1.3]{Sch}).

\begin{lemma}\label{L18}
For every $\alpha\in [g,G]$ we have
\begin{equation*}
J(\alpha):=\int_{\alpha-2}^\alpha \log \vert x\vert \, h_\alpha (x)\, dx =-\frac{\pi^2}{18\log G} .
\end{equation*}
\end{lemma}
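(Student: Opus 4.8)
The plan is to prove that $J(\alpha)$ is independent of $\alpha$ by showing $J'(\alpha)=0$, and then to evaluate the constant at the single value $\alpha=G$, where the density $h_\alpha$ of Corollary~\ref{Cor12} degenerates to one piece. Write $3\log G\cdot h_\alpha$ as a piecewise combination of the three functions $f_1(x)=\frac{1}{x+G+1}$, $f_2(x)=\frac{1}{x+1}$, $f_3(x)=\frac{1}{x+G-1}$. The decisive point is that these do not depend on $\alpha$: only the outer limits $\alpha-2,\alpha$ and the interior breakpoints $\frac{1-\alpha}{\alpha},\frac{\alpha-1}{2-\alpha}$ move with $\alpha$. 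Hence differentiating $J(\alpha)$ under the integral (Leibniz rule) leaves no residual integral, only four boundary terms. Using $f_1(\alpha-2)=f_3(\alpha)=\frac{1}{\alpha+G-1}$ the two outer terms combine to $\frac{1}{\alpha+G-1}\log\frac{\alpha}{2-\alpha}$, while at the breakpoints the jumps of the density produce the differences $f_1-f_2=\frac{-G}{(x+G+1)(x+1)}$ and $f_2-f_3=\frac{G-2}{(x+1)(x+G-1)}$, evaluated at $x=\frac{1-\alpha}{\alpha}$ and $x=\frac{\alpha-1}{2-\alpha}$ and weighted by the derivative of the breakpoint. Collecting $3\log G\cdot J'(\alpha)$ as a linear combination of $\log\alpha,\ \log(2-\alpha),\ \log|\alpha-1|$, each of the three coefficients is a rational function of $\alpha$ whose numerator vanishes after applying $G^2=G+1$; for instance the coefficient of $\log\alpha$ is $\frac{1}{\alpha+G-1}-\frac{G}{1+G\alpha}=\frac{1+G-G^2}{(\alpha+G-1)(1+G\alpha)}=0$, and the other two collapse identically in the same way. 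This is done separately on $g<\alpha<1$ and $1<\alpha<G$ (the middle piece of $h_\alpha$ is $f_1+f_3-f_2$ in the former, $f_2$ in the latter), but the surviving coefficients are literally the same three, so $J'\equiv0$ on both intervals; since the two density formulas agree at $\alpha=1$, $J$ is continuous and hence constant on all of $[g,G]$.

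For the evaluation, note that at $\alpha=G$ the breakpoints collapse onto the endpoints, $\frac{1-G}{G}=G-2$ and $\frac{\alpha-1}{2-\alpha}\big|_{\alpha=G}=G$, so only the middle piece of $h_G$ survives and
\begin{equation*}
J(G)=\frac{1}{3\log G}\int_{G-2}^{G}\frac{\log|x|}{x+1}\,dx .
\end{equation*}
I split this at $0$ and integrate by parts on each half. On $[0,G]$ this gives $\int_0^G\frac{\log x}{1+x}\,dx=2\log^2 G+\operatorname{Li}_2(-G)$ (using $\log(1+G)=2\log G$), and on $[G-2,0]$, after $x\mapsto -x$ and using $2-G=1/G^2$, it gives $-2\log^2 G-\operatorname{Li}_2(1/G^2)$. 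I reduce $\operatorname{Li}_2(-G)$ to the unit disk via the inversion formula $\operatorname{Li}_2(z)+\operatorname{Li}_2(1/z)=-\frac{\pi^2}{6}-\frac12\log^2(-z)$ and the duplication formula $\operatorname{Li}_2(z)+\operatorname{Li}_2(-z)=\frac12\operatorname{Li}_2(z^2)$, combined with the classical golden-ratio values $\operatorname{Li}_2(1/G)=\frac{\pi^2}{10}-\log^2 G$ and $\operatorname{Li}_2(1/G^2)=\frac{\pi^2}{15}-\log^2 G$, obtaining $\operatorname{Li}_2(-G)=-\frac{\pi^2}{10}-\log^2 G$. Substituting, the two halves become $\log^2 G-\frac{\pi^2}{10}$ and $-\log^2 G-\frac{\pi^2}{15}$, whose sum is $-\frac{\pi^2}{6}$, so $J(G)=-\frac{\pi^2}{18\log G}$, as required.

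The differentiation is purely mechanical once the $\alpha$-independence of the three pieces is noticed, with all cancellations forced by $G^2=G+1$; the genuine obstacle is the closed form of $\int_{G-2}^{G}\frac{\log|x|}{x+1}\,dx$, since the naive antiderivative lands on $\operatorname{Li}_2(-G)$ with argument outside the unit disk and the answer emerges only after the $\log^2 G$ contributions cancel against those in the golden-ratio dilogarithm identities. As an independent check I would also run the substitution $t=\frac{x}{1+x}$, which maps $[G-2,G]$ onto $[-g,g]$ and turns the integrand into $\big(\log|t|-\log|1-t|\big)\frac{dt}{1-t}$; the $\log|1-t|$ contribution integrates elementarily to $-\tfrac32\log^2 G$, while the $\log|t|$ part reproduces the same dilogarithm values, again yielding $-\frac{\pi^2}{6}$.
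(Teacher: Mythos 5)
Your proposal is correct, and its skeleton is the same as the paper's: differentiate $J(\alpha)$ via the Leibniz rule (only boundary and breakpoint terms survive, since the three kernels $\frac{1}{x+G+1},\frac{1}{x+1},\frac{1}{x+G-1}$ are $\alpha$-independent), force the cancellations with $G^2=G+1$, and conclude $J$ is constant on $[g,1]$ and $[1,G]$. Your bookkeeping here is in fact a little cleaner than the paper's: you observe that on both parameter intervals the density jumps are the \emph{same} two functions $f_1-f_2$ and $f_2-f_3$ at the \emph{same} two breakpoints $\frac{1-\alpha}{\alpha}$ and $\frac{\alpha-1}{2-\alpha}$, so one computation of the three log-coefficients covers both cases, whereas the paper runs the $g\le\alpha\le1$ computation explicitly and waves at the case $1\le\alpha\le G$ as ``similar.'' The genuine divergence is in pinning down the constant: the paper simply cites Rieger for $J(G)=-\frac{\pi^2}{18\log G}$ and Schweiger for $J(1)=-\frac{\pi^2}{18\log G}$, while you evaluate $J(G)$ from scratch, using that at $\alpha=G$ the density collapses to $\frac{1}{3\log G}\cdot\frac{1}{1+x}$ on $[G-2,G)$ and then computing
\begin{equation*}
\int_{G-2}^{G}\frac{\log|x|}{1+x}\,dx=\operatorname{Li}_2(-G)-\operatorname{Li}_2(1/G^2)
=\Bigl(-\tfrac{\pi^2}{10}-\log^2 G\Bigr)-\Bigl(\tfrac{\pi^2}{15}-\log^2 G\Bigr)=-\tfrac{\pi^2}{6},
\end{equation*}
where $\operatorname{Li}_2(-G)$ is reduced to Landen's golden-ratio values by the inversion and duplication formulas. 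I checked these steps (including $2-G=1/G^2$, $\log(1+G)=2\log G$, and the value $\operatorname{Li}_2(-G)=-\frac{\pi^2}{10}-\log^2 G$) and they are right. What your route buys is self-containedness: the lemma no longer rests on two external computations. What the paper's route buys is brevity and a built-in consistency check, since two independent literature values (at $\alpha=1$ and $\alpha=G$) must agree with the constancy of $J$. One cosmetic remark: in your final ``independent check,'' the contribution of $-\log|1-t|$ to the transformed integral is $+\frac32\log^2 G$ (equivalently $\int_{-g}^{g}\frac{\log|1-t|}{1-t}\,dt=-\frac32\log^2 G$); your phrasing leaves the sign ambiguous, though the totals you report are consistent.
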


\begin{proof}
When $g\leq \alpha \leq 1$ we have
\begin{equation*}
\begin{split}
J(\alpha) = \int_{\alpha-2}^0 & \frac{\log(-x)\, dx}{x+G+1}
+\int_0^{\frac{1}{\alpha}-1} \frac{\log x\, dx}{x+G+1}
+ \int_{\frac{\alpha-1}{2-\alpha}}^0 \frac{\log (-x)\, dx}{x+G-1}  \\
& +\int_0^\alpha \frac{\log x\, dx}{x+G-1}
- \int_{\frac{\alpha-1}{2-\alpha}}^0 \frac{\log (-x)\, dx}{x+1}
- \int_0^{\frac{1}{\alpha}-1} \frac{\log x\, dx}{x+1} .
\end{split}
\end{equation*}
Applying the Fundamental Theorem of Calculus and equalities
$\frac{1}{\alpha}+G =\frac{\alpha+G-1}{(G-1)\alpha}$ and
$\frac{\alpha-1}{2-\alpha} +G-1 =\frac{\alpha+G-1}{(G+1)(2-\alpha)}$, we find
\begin{equation*}
\begin{split}
(\alpha+G-1)J^\prime (\alpha)  = & -\log (2-\alpha) -\frac{G-1}{\alpha} \log \left(\frac{1-\alpha}{\alpha}\right)
-\frac{G+1}{2-\alpha} \log \left( \frac{1-\alpha}{2-\alpha}\right) \\
& + \log\alpha +\frac{\alpha+G-1}{2-\alpha} \log \left( \frac{1-\alpha}{2-\alpha}\right)
+\frac{\alpha+G-1}{\alpha} \log \left( \frac{1-\alpha}{\alpha}\right) = 0.
\end{split}
\end{equation*}

When $1\leq \alpha \leq G$, a similar computation provides $J^\prime (\alpha)=0$ as well.

Employing some basic properties of dilogarithms it was shown in \cite{Rie}
that $J(G) =-\frac{\pi^2}{18\log G}$ and in \cite{Sch0} that
$J(1)=-\frac{\pi^2}{18\log G}$.
\end{proof}

The following are standard consequences of ergodicity of $\varphi_\alpha$:

\begin{cor}\label{Cor19}
For every $\alpha \in [g,G]$ and almost every $x\in \II_\alpha$:
\begin{itemize}
\item[(i)]
$\displaystyle \lim\limits_n \frac{1}{n} \log | q_n (x;\alpha) x-p_n (x;\alpha)| =
- \frac{\pi^2}{18 \log G}$.
\item[(ii)]
$\displaystyle \lim\limits_n \frac{1}{n} \log q_n(x;\alpha) = \frac{\pi^2}{18 \log G}$.
\item[(iii)]
$\displaystyle \lim\limits_n \frac{1}{n} \log \left| x-\frac{p_n(x;\alpha)}{q_n(x;\alpha)} \right|
=-\frac{\pi^2}{9\log G}$.
\end{itemize}
\end{cor}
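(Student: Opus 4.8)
The plan is to deduce all three limits from Birkhoff's ergodic theorem applied to the ergodic system $(I_\alpha,\BB_{I_\alpha},\nu_\alpha,\varphi_\alpha)$ of Proposition \ref{Prop15}, using Lemma \ref{L18} to identify the relevant time average. First I would extract the telescoping identity hidden in \eqref{3.2}: writing $D_n=q_n(x;\alpha)x-p_n(x;\alpha)$ and clearing denominators in \eqref{3.2} gives $D_n=-\varphi_\alpha^n(x)D_{n-1}$, whence, since $D_0=x$ and $\varphi_\alpha^i(x)\neq0$ for all $i$ when $x\in\II_\alpha$,
\begin{equation*}
|q_n(x;\alpha)x-p_n(x;\alpha)|=|x|\prod_{k=1}^n |\varphi_\alpha^k(x)|.
\end{equation*}
Taking $\tfrac1n\log$ and noting $\tfrac1n\log|x|\to0$, part (i) is exactly the statement that $\tfrac1n\sum_{k=1}^n\log|\varphi_\alpha^k(x)|\to\int_{I_\alpha}\log|x|\,d\nu_\alpha=J(\alpha)=-\tfrac{\pi^2}{18\log G}$. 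This is Birkhoff's theorem for the observable $\log|\cdot|$, the one point to verify being that $\log|\cdot|\in L^1(\nu_\alpha)$; this holds because $0$ lies in the interior of $I_\alpha$, $\log|x|$ is integrable near $0$, and $h_\alpha$ is bounded. By \eqref{4.1} the $\nu_\alpha$-null sets coincide with the Lebesgue-null sets, so ``almost every $x$'' is unambiguous.

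For (ii) and (iii) I would not run Birkhoff directly in the natural extension, since the orbit $\Phi_\alpha^n(x,0)$ from \eqref{3.4} lives on the null-set $\{y=0\}$; instead I would bootstrap from (i). Two inputs are needed. The first is that $\tfrac1n\log|\varphi_\alpha^n(x)|\to0$ almost everywhere, which follows from (i) by writing $\tfrac1n\log|\varphi_\alpha^n(x)|=\tfrac1n S_n-\tfrac1n S_{n-1}$ with $S_n=\sum_{k=1}^n\log|\varphi_\alpha^k(x)|$ and using $\tfrac1n S_n\to J(\alpha)$. The second is the exact approximation identity obtained from \eqref{3.2} and \eqref{3.1},
\begin{equation*}
\left| x-\frac{p_n(x;\alpha)}{q_n(x;\alpha)}\right|
=\frac{|\varphi_\alpha^n(x)|}{q_n(x;\alpha)\big(q_n(x;\alpha)+\varphi_\alpha^n(x)q_{n-1}(x;\alpha)\big)},
\end{equation*}
whose denominator factor $q_n+\varphi_\alpha^n q_{n-1}$ is pinned between $C_\alpha q_n$ and $2Gq_n$ by Lemma \ref{Lemma9}. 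Taking $\tfrac1n\log$ and using $\tfrac1n\log|\varphi_\alpha^n(x)|\to0$ together with the boundedness of $C_\alpha$ and $2G$, the two-sided bound collapses to
\begin{equation*}
\frac1n\log\left| x-\frac{p_n(x;\alpha)}{q_n(x;\alpha)}\right|+\frac2n\log q_n(x;\alpha)\longrightarrow 0 .
\end{equation*}

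Finally I would solve the resulting linear system. Setting $a_n=\tfrac1n\log|q_nx-p_n|$, $b_n=\tfrac1n\log q_n$, and $c_n=\tfrac1n\log|x-p_n/q_n|$, the identity $|q_nx-p_n|=q_n|x-p_n/q_n|$ gives $a_n=b_n+c_n$, part (i) gives $a_n\to J(\alpha)$, and the previous step gives $c_n+2b_n\to0$. Eliminating, $b_n\to-J(\alpha)=\tfrac{\pi^2}{18\log G}$, which is (ii), and then $c_n\to2J(\alpha)=-\tfrac{\pi^2}{9\log G}$, which is (iii). The only genuinely delicate point is the $L^1$-integrability of $\log|\cdot|$ needed to invoke Birkhoff, everything else being bookkeeping with \eqref{3.1}, \eqref{3.2}, \eqref{3.4}, Lemma \ref{Lemma9}, and Lemma \ref{L18}; I would flag that the apparent need to control $\varphi_\alpha^n(x)$ near $0$ (where the error estimate degenerates) is precisely what the almost-everywhere statement $\tfrac1n\log|\varphi_\alpha^n(x)|\to0$ handles.
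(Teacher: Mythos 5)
Your proof is correct, and part (i) is exactly the paper's argument: the telescoping identity $q_nx-p_n=-\varphi_\alpha^n(x)\,(q_{n-1}x-p_{n-1})$ (this is the content of the paper's \eqref{4.6}, there derived from \eqref{3.3}; your normalization $|q_nx-p_n|=|x|\prod_{k=1}^n|\varphi_\alpha^k(x)|$ is the correct one, the printed \eqref{4.6} having an inconsequential misprint in the $|x|$ factor), Birkhoff's theorem for the observable $\log|\cdot|$ on the ergodic system of Proposition \ref{Prop15}, the $L^1$ check, and Lemma \ref{L18}. Where you genuinely diverge is in (ii) and (iii). The paper deduces (ii) from (i) together with the already-established two-sided bound of Proposition \ref{Prop10}, $\frac{1}{q_nq_{n+1}}\leq|x-p_n/q_n|\leq c_1/q_n^2$, equivalently $\frac{1}{q_{n+1}}\leq|q_nx-p_n|\leq c_1/q_n$ (the nontrivial lower bound there was obtained by bounding $1/|\varphi_\alpha^n(x)|$ above by $d_{n+1}+\alpha$, thereby converting the error into a statement about $q_{n+1}$ and forcing a harmless index shift in the liminf), and then gets (iii) as the difference of (i) and (ii). You instead bypass Proposition \ref{Prop10} entirely: you keep the factor $|\varphi_\alpha^n(x)|$ explicit in the exact identity $|x-p_n/q_n|=|\varphi_\alpha^n(x)|/\bigl(q_n(q_n+\varphi_\alpha^n(x)q_{n-1})\bigr)$, control the denominator by Lemma \ref{Lemma9} alone, and kill the error factor with the Ces\`aro-difference observation that $\frac1n\log|\varphi_\alpha^n(x)|\to0$ almost everywhere, which follows from (i) itself and is not circular. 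Both routes are sound. The paper's is shorter because Proposition \ref{Prop10} is already in hand; yours is more self-contained (only Lemma \ref{Lemma9}, \eqref{3.1}, \eqref{3.2} and the ergodic theorem are needed), avoids the $q_n\mapsto q_{n+1}$ re-indexing in the lower bound, and makes explicit the genuinely dynamical input---that $|\varphi_\alpha^n(x)|$ decays at most subexponentially along almost every orbit---which in the paper is hidden inside the proof of Proposition \ref{Prop10}.
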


\begin{proof}
(i) Equation \eqref{3.3} yields
\begin{equation}\label{4.6}
\prod\limits_{k=0}^n | \varphi_\alpha^k (x)| =
\frac{| q_n (x;\alpha) x-p_n (x;\alpha)|}{| x|} ,
\quad \forall x\in \II_\alpha .
\end{equation}
Since $\varphi_\alpha$ is ergodic and $\log | x| \in L^1 (\nu_\alpha)=L^1 (\lambda )$,
the ergodic theorem yields
\begin{equation}\label{4.7}
\lim\limits_n \frac{1}{n+1} \sum\limits_{k=0}^n \log | \varphi_\alpha^k (x)|
=\int_{I_\alpha} \log | x| \, d\nu_\alpha (x) =-\frac{\pi^2}{18 \log G} .
\end{equation}
The statement follows from \eqref{4.6} and \eqref{4.7}.

(ii) follows from (i) and $\frac{1}{q_{n+1}} \leq | q_n (x;\alpha) x-p_n (x;\alpha)|
\leq \frac{c_1}{q_n}$, which was proved in Proposition \ref{Prop10}.
\par (iii) is a consequence of (i) and (ii).
\end{proof}

\begin{theorem}\label{Thm20}
For every $\alpha \in [g,G]$ the entropy of $(\varphi_\alpha,\nu_\alpha)$ and
$(\Phi_\alpha,\mu_\alpha)$ is
\begin{equation*}
h_{\mu_\alpha}(\Phi_\alpha)= h_{\nu_\alpha}(\varphi_\alpha) = \frac{\pi^2}{9\log G} .
\end{equation*}
\end{theorem}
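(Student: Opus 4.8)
The plan is to compute the entropy of $\varphi_\alpha$ directly and then invoke the fact that a natural extension has the same Kolmogorov-Sinai entropy as the base system. For the endomorphism $\varphi_\alpha$, the natural tool is the Rohlin formula, which expresses the entropy of a piecewise-monotone Lebesgue-absolutely-continuous measure-preserving transformation as the integral of the logarithm of the absolute value of its derivative against the invariant measure. Concretely, I would first establish that $\varphi_\alpha$ satisfies the hypotheses of Rohlin's entropy formula: it is an expanding (eventually, by Proposition~\ref{Prop8}) piecewise-$C^1$ map, the invariant measure $\nu_\alpha$ is Lebesgue-absolutely-continuous with density $h_\alpha$ bounded away from $0$ and $\infty$ by Corollary~\ref{Cor12}, and $\varphi_\alpha$ is ergodic by Proposition~\ref{Prop15}. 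Rohlin's formula then gives
\begin{equation*}
h_{\nu_\alpha}(\varphi_\alpha) = \int_{I_\alpha} \log | \varphi_\alpha^\prime (x)| \, d\nu_\alpha (x).
\end{equation*}

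Next I would evaluate the derivative. From \eqref{1.2}, on each cylinder $\langle b\rangle_\alpha$ we have $\varphi_\alpha (x) = \frac{1}{|x|} - d_\alpha (x)$ with $d_\alpha$ locally constant, so $\varphi_\alpha^\prime (x) = -\frac{1}{x^2}$ and hence $\log | \varphi_\alpha^\prime (x)| = -2 \log | x|$. Substituting this into the Rohlin integral yields
\begin{equation*}
h_{\nu_\alpha}(\varphi_\alpha) = -2 \int_{I_\alpha} \log | x| \, d\nu_\alpha (x) = -2 \int_{\alpha-2}^\alpha \log | x| \, h_\alpha (x)\, dx = -2 J(\alpha),
\end{equation*}
using the notation $J(\alpha)$ from Lemma~\ref{L18}. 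By that lemma, $J(\alpha) = -\frac{\pi^2}{18 \log G}$ for every $\alpha \in [g,G]$, so $h_{\nu_\alpha}(\varphi_\alpha) = \frac{\pi^2}{9 \log G}$, independent of $\alpha$. Finally, since $(\Omega_\alpha,\BB_{\Omega_\alpha},\tilde{\mu}_\alpha,\Phi_\alpha)$ is the natural (minimal invertible) extension of $(I_\alpha,\BB_{I_\alpha},\nu_\alpha,\varphi_\alpha)$ by Theorem~\ref{Thm14}, the Abramov--Rohlin theory of extensions gives that $\Phi_\alpha$ and $\varphi_\alpha$ share the same entropy, and rescaling by the normalization constant $3\log G$ relating $\mu_\alpha$ to $\tilde{\mu}_\alpha$ does not affect the entropy value, so $h_{\mu_\alpha}(\Phi_\alpha) = h_{\nu_\alpha}(\varphi_\alpha) = \frac{\pi^2}{9\log G}$.

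The main obstacle is justifying the applicability of Rohlin's formula rather than the computation itself, which is short once the formula is granted. The cleanest route is to verify Rohlin's regularity conditions: the partition into cylinders $\{\langle b\rangle_\alpha\}$ is a generating partition (this follows from Lemma~\ref{Lemma13} together with the fact that $\UU(\alpha)$ generates $\BB_{I_\alpha}$), each branch is $C^1$ and monotone with $|\varphi_\alpha^\prime| > 1$ away from the boundary, and $\log|\varphi_\alpha^\prime| = -2\log|x| \in L^1(\nu_\alpha)$, which is exactly the integrability established in Lemma~\ref{L18}. An alternative that sidesteps a direct appeal to Rohlin's theorem is to combine the Shannon--McMillan--Breiman theorem with Corollary~\ref{Cor19}: the entropy equals the almost-sure exponential decay rate $-\lim_n \frac{1}{n}\log \nu_\alpha(\langle\omega_{\alpha,1}(x),\dots,\omega_{\alpha,n}(x)\rangle_\alpha)$, and by the comparison \eqref{4.3} between cylinder length and $q_n^{-2}$ together with Corollary~\ref{Cor19}(ii), this rate equals $2 \lim_n \frac{1}{n}\log q_n(x;\alpha) = \frac{\pi^2}{9\log G}$; this is perhaps the most self-contained argument given what has already been proved in the excerpt.
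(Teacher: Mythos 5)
Your proposal is correct, and your primary route is genuinely different from the paper's. In fact, the paper proves the theorem by exactly your ``alternative'' argument: it invokes the Shannon--McMillan--Breiman theorem in Chung's form for countable partitions, so that $h_{\nu_\alpha}(\varphi_\alpha)=-\lim_n \frac{1}{n}\log\nu_\alpha(\langle\omega_{\alpha,1}(x),\ldots,\omega_{\alpha,n}(x)\rangle_\alpha)$ a.e., and then evaluates this limit along the subsequence $n_k$ of Lemma \ref{Lemma13}, for which $\varphi_\alpha^{n_k}$ maps the cylinder onto all of $I_\alpha$. That last point is the one detail your alternative glosses over: the lower bound in \eqref{4.3} carries the factor $\lambda(J_n(\omega))$, which is not uniformly bounded below over all cylinders, so the two-sided comparison $\lambda(\Delta_n(\omega))\asymp q_n^{-2}(\omega)$ is only available along such a subsequence --- which suffices precisely because SMB already guarantees that the full limit exists. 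Combined with \eqref{4.1} and Corollary \ref{Cor19}(ii), this gives $\frac{\pi^2}{9\log G}$, and entropy is preserved under passage to the natural extension, exactly as you say. Your primary route via Rohlin's entropy formula is a legitimately different argument, and the computation $h_{\nu_\alpha}(\varphi_\alpha)=\int_{I_\alpha}\log|\varphi_\alpha'|\,d\nu_\alpha=-2J(\alpha)$ is correct; what it buys is a one-line reduction to Lemma \ref{L18}, bypassing Lemma \ref{Lemma13}, \eqref{4.3}, and Corollary \ref{Cor19} entirely. What it costs is the verification of Rohlin's hypotheses, which you rightly flag as the main obstacle but slightly understate: $\varphi_\alpha$ is not uniformly expanding, since $|\varphi_\alpha'(x)|=x^{-2}<1$ on the positive-measure set where $|x|>1$ (inside $\langle -1\rangle_\alpha$ or $\langle 1\rangle_\alpha$), so ``$|\varphi_\alpha'|>1$ away from the boundary'' does not dispose of the issue; one genuinely needs eventual expansion (Proposition \ref{Prop8}) together with distortion control and the generating property of the countable partition, and writing that out carefully is comparable in length to the paper's route. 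Note finally that both approaches rest on the same analytic input, $J(\alpha)=-\frac{\pi^2}{18\log G}$ from Lemma \ref{L18}: in the paper it enters through the ergodic-theorem computation behind Corollary \ref{Cor19}(ii), in your route directly as the value of the Rohlin integral.
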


\begin{proof}
Following the argument in the proof of Theorem 3 in \cite{Nak}, this can be deduced from
the Shannon-McMillan-Breiman theorem, extended by Chung for countable partitions, which gives
\begin{equation*}
h_{\nu_\alpha} (\varphi_\alpha) =-\lim\limits_{n\rightarrow \infty} \frac{1}{n} \log
\nu_\alpha (\langle \omega_{\alpha,1} (x),\ldots,\omega_{\alpha,n_k}(\alpha)\rangle_\alpha )
\quad \mbox{\rm for a.e. $x\in I_\alpha$.}
\end{equation*}
Choose $x$ and some subsequence $n_k=n_k(x)$ as in Lemma \ref{Lemma13}. Then
by Proposition \ref{Prop10} we see that
\begin{equation*}
\frac{c_3}{q_{n_k}^2 (x;\alpha)} \leq \lambda (\langle \omega_{\alpha,1} (x),\ldots ,\omega_{\alpha,n_k} (x)
\rangle_\alpha ) \leq \frac{c_4}{q_{n_k}^2 (x;\alpha)} .
\end{equation*}

Taking also into account \eqref{4.1} we find
\begin{equation*}
h_{\nu_\alpha} (\varphi_\alpha) =2\lim\limits_{k\rightarrow \infty}
\log q_{n_k} (x;\alpha) =\frac{\pi^2}{9\log G} .
\end{equation*}
Entropy being preserved under the natural extension, the equality
$h_{\mu_\alpha}(\Phi_\alpha)=h_{\nu_\alpha} (\varphi_\alpha)$ follows.
\end{proof}

\begin{center}
\begin{figure}
\includegraphics[scale=1.2, bb = 0 0 250 200]{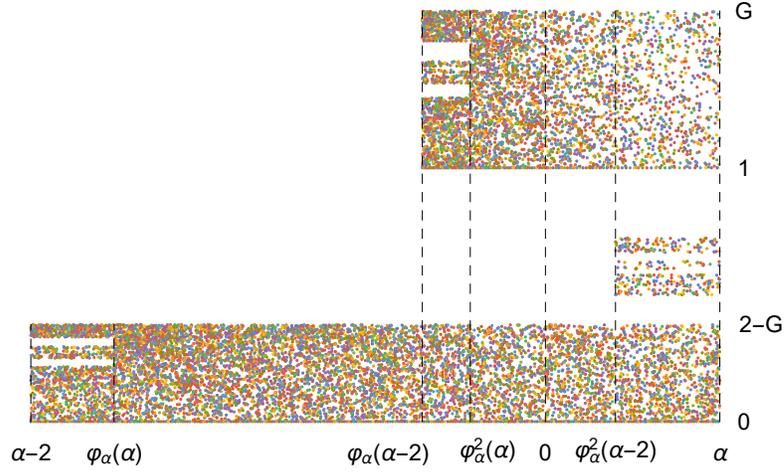}
\caption{\small The set $\{ \Phi_\alpha^n (I_\alpha \times \{ 0\}):0\leq n\leq 50\} $ with
$\alpha =0.9 g$}\label{Figure2}
\end{figure}
\end{center}

When $\alpha <g$, numerical experiments (see especially Fig. \ref{Figure2}) suggest that
the shape of the natural extension domain
$\Omega_\alpha =\overline{\{ \Phi_\alpha^n (I_\alpha \times \{ 0\}): n\geq 0\}}$
becomes considerably more intricate, as it happens in the situation of
the $\alpha$-RCF expansion with $\alpha <\frac{1}{2}$. This will be
investigated elsewhere.

\section*{Acknowledgments}
We are grateful to Niels Langeveld for pointing out two inaccuracies in an
earlier draft of the paper, to Pierre Arnoux for useful comments, and to the referee for
careful reading and a number of useful comments and corrections that improved
the paper, especially Section 4.

\end{document}